\documentclass[11pt]{amsart}
\usepackage{amssymb, latexsym}
\theoremstyle{plain}
\newtheorem{theorem}{Theorem}
\newtheorem{corollary}{Corollary}
\newtheorem*{Z}{Theorem (Zhang)}
\newtheorem*{I}{Theorem (Ishige)}

\newtheorem {lemma}{Lemma}

\theoremstyle{remark}

\newtheorem*{Remark 1}{Remark 1}
\newtheorem*{Remark 2}{Remark 2}

\numberwithin{equation}{section}

\begin{document}

\title[Fujita Exponent for Semilinear Heat Equation]
 {The Fujita Exponent for  Semilinear Heat Equations with  Quadratically Decaying  Potential or in an Exterior Domain }

\author{Ross   Pinsky}
\address{Department of Mathematics\\
Technion---Israel Institute of Technology\\
Haifa, 32000\\ Israel} \email{pinsky@math.technion.ac.il}
\urladdr{http://www.math.technion.ac.il/~pinsky/}

\subjclass[2000]{35K55, 35B33, 35B40 } \keywords{critical exponent, blow-up, global solution, Fujita exponent, exterior domain}
\date{}

\begin{abstract}
Consider the equation
\begin{equation}\label{abstract}
\begin{aligned}
&u_t=\Delta u-Vu +au^p \ \text{in}\ R^n\times (0,T);\\
&u(x,0)=\phi(x)\gneq0,\ \text{in}\ R^n,
\end{aligned}
\end{equation}
where $p>1$, $n\ge2$, $T\in(0,\infty]$,  $V(x)\sim\frac\omega{|x|^2}$ as $|x|\to\infty$, for some $\omega\neq0$, and $a(x)$ is on the order $|x|^m$ as $|x|\to\infty$, for
some $m\in (-\infty,\infty)$.
A solution to the above equation is called global if $T=\infty$. Under some additional technical conditions, we calculate a critical exponent
$p^*$ such that global solutions exist for $p>p^*$, while for $1<p\le p^*$, all solutions blow up in finite time.
We also show that  when $V\equiv0$, the blow-up/global solution dichotomy  for \eqref{abstract} coincides with that for the corresponding
problem in an exterior domain with the Dirichlet boundary condition, including the case in which $p$ is equal to the critical exponent.
\end{abstract}
\maketitle
\section{Introduction and Statement of Results}\label{Intro}

Consider the semilinear heat equation
\begin{equation}\label{prob}
\begin{aligned}
&u_t=\Delta u-Vu +u^p\ \text{in}\ R^n\times(0,T); \\
&u(x,0)=\phi(x)\gneq0 \ \text{in}\ R^n,
\end{aligned}
\end{equation}
where $p>1$, $n\ge1$ and $T\in(0,\infty]$.
 In this paper, when we speak of a \it solution\rm\  to the above equation, or to any of the other equations appearing later on, we mean
\it a classical solution $u$ satisfying $||u(\cdot,t)||_\infty<\infty$, for $0<t<T$.\rm\
 This allows us to employ comparison principles.
A solution to \eqref{prob} is called global if $T=\infty$.
In the case that $V\equiv0$,
$p^*\equiv1+\frac2n$ is the critical exponent, the so-called \it
Fujita exponent\rm, and one has the following dichotomy:
if $p>p^*$, then for sufficiently small
initial data $\phi$, the solution to \eqref{prob} is global,  whereas
if $1<p\le p^*$, then  \eqref{prob} has  no global solution---every solution blows up in
finite time.
This
result goes back to Fujita  \cite{F} in the case $p\neq p^*$.
Various proofs of blow-up in the borderline case $p=p^*$ can be
found in \cite{AW}, \cite{KST}, \cite{P97}.

More recently, Zhang \cite{Z01} considered \eqref{prob} with $n\ge3$ for potentials $V$ behaving like $\frac{\omega}{1+|x|^b}$, for $b>0$ and $\omega\neq0$.
He proved the following result.
\begin{Z} Let $n\ge3$.

\noindent i. If
 $0\le V(x)\le\frac \omega{1+|x|^b}$, for some $b>2$ and $\omega>0$, then $p^*=1+\frac2n$ and consequently the potential does not
 affect the critical exponent;

\noindent ii. If $V(x)\ge\frac \omega{1+|x|^b}$, for some $b\in(0,2)$ and $\omega>0$, then $p^*=1$ and there exist global solutions for all $p>1$;

\noindent iii.  If $ \frac \omega{1+|x|^b}\le V(x)\le 0$, for some $b>2$  and    $\omega<0$ with  $|\omega|$ sufficiently small, then
$p^*=1+\frac2n$ and consequently the potential does not affect the critical exponent;

\noindent iv. If $V(x)\le\frac \omega{1+|x|^b}$, for some  $b\in(0,2)$ and $\omega<0$, then $p^*=\infty$ and there are no global solutions for any $p>1$.
\end{Z}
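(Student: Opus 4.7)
The plan is to handle the four cases by combining semigroup estimates for $S(t):=e^{t(\Delta-V)}$ with the Duhamel representation
\[
u(t)=S(t)\phi+\int_0^t S(t-s)u(s)^p\,ds,
\]
feeding the resulting integral equation into either a small-data fixed-point iteration (to produce global solutions) or a Kaplan-type moment inequality (to force blow-up). The threshold $b=2$ naturally splits the analysis into a sub-Hardy regime, in which $V$ is a small perturbation of $-\Delta$ and $S$ inherits two-sided Gaussian bounds from the heat kernel, and a super-Hardy regime, in which $V$ qualitatively changes the spectral picture.

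For parts (i) and (iii), the heart of the argument is to establish $c_1G(t,x,y)\le S(t,x,y)\le c_2G(t,x,y)$, where $G$ is the free heat kernel. In (i) with $V\ge0$, the upper bound is immediate from Feynman--Kac, while the lower bound follows from the fact that $\int_0^\infty V(B_s)\,ds$ is almost-surely finite along Brownian paths exactly when $b>2$. In (iii) with $V\le 0$ the lower bound is automatic, and the upper bound is obtained from a Khasminskii expansion whose convergence is guaranteed by the smallness of $|\omega|$. Once these bounds are in hand, the classical Fujita dichotomy transfers with $G$ replaced by $S$: global existence for $p>1+\tfrac2n$ and small $\phi$ via a fixed-point argument on Duhamel, and blow-up for $1<p<1+\tfrac2n$ via Kaplan's method applied to a moment $\int u(x,t)\,S(t,x,0)\,dx$.

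For parts (ii) and (iv), the critical exponent collapses because the potential dominates the kinetic term on large scales. In (ii), a comparison with the model $\omega|x|^{-b}$-potential operator (or a Hardy-type weighted estimate) gives $\|S(t)\phi\|_\infty\le Ct^{-\alpha}\|\phi\|_1$ with $\alpha>n/2$, and this extra decay makes Duhamel converge for every $p>1$ and every small $\phi$. In (iv), a Rayleigh-quotient test function supported at scale $R$ produces quotient $\sim R^{-2}+\omega R^{-b}$, which becomes negative as $R\to\infty$ since $b<2$; hence the principal eigenvalue $\lambda_R$ of $-\Delta+V$ on $B_R$ satisfies $\lambda_R<0$ for large $R$. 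Testing the equation against the corresponding positive eigenfunction $\varphi_R$ yields $F'(t)\ge(-\lambda_R)F(t)+cF(t)^p$ for $F(t):=\int u(\cdot,t)\varphi_R$, and the right-hand side forces $F$ to blow up in finite time for every $p>1$ as soon as $F(t_0)>0$, which follows by positivity after any short time.

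The principal obstacle is the borderline equality $p=1+\tfrac2n$ in parts (i) and (iii). The naive Kaplan moment produces only a logarithmic divergence there, so the elementary argument does not close, and one must import a sharper blow-up technique --- such as the rescaling argument of Aronson--Weinberger or the weighted-integral approach of \cite{P97} --- into the Schr\"odinger-semigroup framework. The technical crux is verifying that the two-sided Gaussian bounds on $S(t,x,y)$ are sufficiently uniform in $y$ and $t$ (in particular near any singularity of $V$) for the critical-case self-similar argument to go through intact.
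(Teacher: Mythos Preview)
The paper does not prove this statement: it is quoted verbatim as a result of Zhang \cite{Z01} and serves purely as background motivation for the paper's own Theorems \ref{negative}--\ref{positive} on the borderline quadratic-decay case. There is therefore no proof in the paper against which to compare your proposal.

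That said, one piece of your outline does connect to what the paper actually proves. Your treatment of part (iv) --- scaling a test function to show the principal Dirichlet eigenvalue of $-\Delta+V$ on a large ball is negative, then running a Kaplan moment argument against the principal eigenfunction --- is exactly the mechanism the paper isolates as its Theorem \ref{spectral} (and applies in Corollary \ref{corspectral} to the quadratic case $\omega<-\tfrac14(n-2)^2$). Your Rayleigh-quotient scaling observation $R^{-2}+\omega R^{-b}<0$ for $b<2$ and large $R$ is the correct way to feed Zhang's sub-quadratic hypothesis into that spectral criterion, and the paper's Remark 2 following Corollary \ref{corspectral} makes precisely this connection.

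For parts (i)--(iii), your Feynman--Kac/Khas'minskii route to two-sided Gaussian bounds on $e^{t(\Delta-V)}$, followed by a transplant of the standard Fujita dichotomy, is a reasonable sketch of how such results are typically obtained, and you correctly flag the critical case $p=1+\tfrac2n$ as the genuine difficulty. But none of this is carried out in the present paper; if you want to compare against an actual proof you would need to consult \cite{Z01}.
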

Note that
wherever the statement of the result is that there exist global solutions,
 Zhang  either does not allow for negative $V$ or else requires that $|V|$ be sufficiently small. The reason for this  will
 become clear from Theorem \ref{spectral} below.

Zhang noted that it seemed difficult to specify the exact value of the critical exponent in the case
of quadratic decay; that is in the case that $V(x)\sim\frac \omega{|x|^2}$ as $|x|\to\infty$. He also noted that it is unclear
whether or not $p^*$ is finite  in the case that $V(x)\sim\frac \omega{|x|^2}$, with $\omega<0$.

Very recently, Ishige \cite{I} treated \eqref{prob}  for $n\ge3$ in
the case $V(x)\sim\frac \omega{|x|^2}$ with $\omega>0$.
Let $\alpha=\alpha(\omega,n)$ denote the larger root of the equation
$\alpha(\alpha+n-2)=\omega$; that is
\begin{equation}\label{root}
\alpha(\omega,n)=\frac{2-n+\sqrt{(n-2)^2+4\omega}}2.
\end{equation}
Since we are assuming here that $\omega>0$, one has $\alpha(\omega,n)>0$.
Define
\begin{equation}\label{pomega}
p^*(\omega)=1+\frac 2{n+\alpha(\omega,n)}.
\end{equation}
\begin{I}
Let $n\ge3$ and assume that  $V\ge0$. Let $\omega>0$.

\noindent i.  If
 $V(x)\ge \frac \omega{|x|^2}$ for large $|x|$, then for $p>p^*(\omega)$
there exist global solutions to \eqref{prob};

\noindent ii. If $V(x)\le\frac\omega{|x|^2}$ for large $|x|$, then for $1<p\le p^*(\omega)$
every  solution to \eqref{prob} blows up in finite time.

\end{I}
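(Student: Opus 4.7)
The theorem has two opposite assertions---global existence in (i) and universal blow-up in (ii)---which call for quite different techniques.

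For (i), my plan is to construct a Gaussian-type global supersolution and apply the comparison principle. The key observation is that, by the defining equation $\alpha(\alpha+n-2)=\omega$, the function $h(x)=|x|^\alpha$ satisfies $(\Delta - \omega/|x|^2)h=0$, and therefore $\Delta h - Vh \leq 0$ under the hypothesis $V \geq \omega/|x|^2$ at infinity. Motivated by the heat kernel of $\Delta - \omega/|x|^2$, whose effective dimension is $n+2\alpha$, I would try the ansatz
\[
w(x,t) = A(1+t)^{-\beta}\, h(x)\, \exp\!\bigl(-|x|^2/(4(1+t))\bigr).
\]
A direct computation using $x\cdot\nabla h = \alpha h$ produces cancellations yielding $w_t - \Delta w + Vw \geq A(1+t)^{-\beta-1}\, h\, e^{-|x|^2/(4(1+t))}\, (n/2 + \alpha - \beta)$. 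Demanding $w_t - \Delta w + Vw \geq w^p$ then reduces, after optimizing $|x|^{\alpha(p-1)} e^{-(p-1)|x|^2/(4(1+t))}$ in $x$ (whose maximum grows like $(1+t)^{\alpha(p-1)/2}$), to the conjunction $\beta < n/2 + \alpha$ and $(\beta - \alpha/2)(p-1) \geq 1$. These are jointly solvable in $\beta$ exactly when $1/(p-1) < (n+\alpha)/2$, i.e.\ $p > p^*(\omega)$. Choosing such a $\beta$ and small $A$, the supersolution dominates any sufficiently small initial datum, giving a global solution by comparison. A minor technical point---that $h=|x|^\alpha$ satisfies $\Delta h \leq Vh$ only at infinity---is handled by replacing $h$ with $(R^2+|x|^2)^{\alpha/2}$ for $R$ large and absorbing the error in the bounded region.

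For (ii) in the subcritical range $p<p^*(\omega)$, I would run a Fujita-style iteration using Duhamel's formula together with heat kernel lower bounds. Since $V \leq \omega/|x|^2$ at infinity, Feynman--Kac gives $e^{t(\Delta - V)} \geq e^{t(\Delta - W)}$ for any bounded $W \geq V$ equal to $\omega/|x|^2$ outside a ball, and the sharp two-sided estimates available for such $W$ yield, in the regime $|x|,|y| \lesssim \sqrt t$,
\[
p_W(t,x,y) \asymp |x|^\alpha |y|^\alpha\, t^{-(n+2\alpha)/2}\, \exp(-c|x-y|^2/t).
\]
With $\phi \gneq 0$ supported away from the origin, this produces the base bound $u(x,t) \geq C_0\, |x|^\alpha\, t^{-(n+2\alpha)/2}$ on $\{|x| \leq \tfrac12 \sqrt t\}$ for $t$ large. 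Iterating the mild-form inequality $u(x,t) \geq \int_0^t e^{(t-s)(\Delta-V)}(u^p(\cdot,s))(x)\,ds$ on the time slab $s \in [t/2, 3t/4]$, where the kernel lower bound applies and $t-s \asymp t$, yields recursively $u(x,t) \geq C_k\, |x|^\alpha\, t^{E_k}$ with the recurrence $E_{k+1} = pE_k + 1 + \alpha(p-1)/2$. The fixed point is $E^* = -1/(p-1) - \alpha/2$, and $E_0 = -(n+2\alpha)/2$ exceeds $E^*$ iff $p < p^*(\omega)$; in that case $E_k \to +\infty$ with the constants $C_k$ controllable, contradicting the finiteness of $u(x,t)$ at any fixed point $(x,t)$ with $t$ beyond a threshold depending on the multiplicative constants.

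The main obstacle is the critical case $p = p^*(\omega)$, where the power iteration stalls since $E_0 = E^*$. The remedy, paralleling Hayakawa's treatment of the classical Fujita case, is to shift the time slab of integration to $s \in [T_0, t/2]$ and exploit that the resulting time integrand, whose $s$-exponent $pE^* + (n+\alpha(1+p))/2$ equals exactly $-1$ at $p = p^*(\omega)$, produces $\int_{T_0}^{t/2} s^{-1}(\log s)^{p\gamma}\,ds \sim (\log t)^{p\gamma+1}$ and thereby picks up an extra factor of $\log t$ per iteration. This upgrades the lower bound to $u(x,t) \geq C_k\, |x|^\alpha\, t^{E^*}(\log t)^{\gamma_k}$ with $\gamma_{k+1} = p\gamma_k + 1$, driving $\gamma_k \to +\infty$ and yielding a contradiction. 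Making this log-iteration rigorous, with uniform control of the constants $C_k$ and careful handling of the spatial integral against the $(|x||y|)^\alpha$ factor of the kernel, is where the argument is most delicate.
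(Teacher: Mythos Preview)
First, note that this statement is \emph{attributed} to Ishige in the paper and is not proved there; what the paper does prove is its own Theorem~\ref{positive}, which treats the same $\omega>0$ regime but under the stronger hypothesis $V(x)\ge\omega/|x|^2$ for \emph{all} $x$ rather than only for large $|x|$ with $V\ge0$ elsewhere.

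\medskip
\noindent\textbf{Part (i).} Your supersolution ansatz $w=A(1+t)^{-\beta}|x|^\alpha\exp(-|x|^2/4(1+t))$ and the resulting constraints $\beta<n/2+\alpha$, $(\beta-\alpha/2)(p-1)\ge1$ are exactly the paper's construction in Section~2 (specialized to $m=0$). The difficulty is your proposed fix for the region where only $V\ge0$ is known. Replacing $h=|x|^\alpha$ by $(R^2+|x|^2)^{\alpha/2}$ does make the error $V-\Delta h/h$ \emph{bounded} on the ball $\{|x|\le R_0\}$, but it remains of order~1 in~$t$, while the only absorbing term in the supersolution inequality is $(n/2+\alpha-\beta)/(1+t)$, which decays to~0. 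Hence for large $t$ and $|x|$ in the bad region, $w_t-\Delta w+Vw$ becomes negative and cannot dominate $w^p>0$. Adding exponential time factors does not help either: $e^{-\mu t}$ makes the left side more negative, while $e^{+\mu t}$ makes $w^{p-1}$ grow so that $w^p$ eventually dominates. The paper's remark after Theorem~\ref{positive} states explicitly that the supersolution method ``does not seem to be extendable'' to Ishige's hypothesis; Ishige's own proof proceeds instead by comparison with the linear equation $v_t=\Delta v-\hat V(|x|)v$ and uses the large-time asymptotics of Ishige--Kawakami. So your handling of (i) has a genuine gap at precisely the point that distinguishes Ishige's theorem from the paper's Theorem~\ref{positive}.

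\medskip
\noindent\textbf{Part (ii).} Here your route is legitimate but different from the paper's. The paper first applies the substitution $v=r^{-\alpha}u$, which converts the problem into an exterior Dirichlet problem for $w_{rr}+\frac{N-1}{r}w_r$ with $N=n+2\alpha$, and then uses the explicit Bessel heat kernel $q_{(N)}$ together with the Grigor'yan--Saloff-Coste comparison \eqref{qqbar} to obtain a \emph{single} a~priori lower bound (Lemma~\ref{key}) of the form $w(r,t)\ge Ct^{-N/2}\log(1+t)e^{-Kr^2/t}$; blow-up is then forced by Kaplan's eigenfunction method on an annulus $(n,2n)$ with $n\sim t^{1/2}$. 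Your proposal instead invokes the sharp two-sided heat-kernel bounds for Schr\"odinger operators with inverse-square potential and runs the full Fujita--Hayakawa iteration to drive the exponent $E_k$ (or, at criticality, the log-exponent $\gamma_k$) to infinity. Both approaches work; the paper's has the advantage of needing only one iteration step plus an ODE argument, while yours avoids the $r^{-\alpha}$ transformation and the Bessel-kernel manipulations at the cost of citing the Liskevich--Sobol/Milman--Semenov type kernel estimates and carefully tracking the iteration constants $C_k$. One technicality: your Feynman--Kac comparison ``$e^{t(\Delta-V)}\ge e^{t(\Delta-W)}$ for bounded $W\ge V$'' requires $V$ to be bounded above, which Ishige's hypothesis does not guarantee near the origin; this is easily circumvented by comparing instead with the Dirichlet problem on $\{|x|>1\}$, which is essentially what the paper does.
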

Note that Ishige assumes from the outset that $V\ge0$. The delicacy between having global solutions and allowing
$V$ to take negative values will be explained by Theorem \ref{spectral} below.

Ishige's proof involved comparison with a solution to the radially symmetric linear equation
$v_t=\Delta v-\hat V(|x|) v$, where $\hat V(r)\sim\frac\omega{r^2}$ as $r\to\infty$. The large time behavior of this
linear equation, which is needed for the comparison,  was recently obtained by Ishige and Kawakami \cite{IK}.

In this paper, our main focus  is the study of  the remaining case,  $V(x)\sim\frac\omega{|x|^2}$, with $\omega<0$.
 In fact we treat the following more general problem:
\begin{equation}\label{probextra}
\begin{aligned}
&u_t=\Delta u-Vu +au^p \ \text{in}\ R^n\times (0,T);\\
&u(x,0)=\phi(x)\gneq0,\ \text{in}\ R^n,
\end{aligned}
\end{equation}
where $p>1$, $n\ge2$, $T\in(0,\infty]$, $\phi$ is bounded and continuous,
$0\lneq a\in C^\alpha(R^n)$ and
$V\in C^\alpha(R^n-\{0\})$  $\alpha\in(0,1]$.
We also require that $\liminf_{x\to0}V(x)>-\infty$ so that $V$ is locally bounded from below.
Our methods, which are completely different from the method employed by Ishige, also allow one to obtain weaker versions of Ishige's results for the case
$\omega>0$, but in the more general context of equation \eqref{probextra} with $n\ge2$.
The method of proof also leads naturally to a study of the critical exponent in an exterior domain with the Dirichlet boundary
condition in the case $V\equiv0$.

In the case that $V\equiv0$  and that $a$ satisfies
\begin{equation}\label{2sided}
c_1|x|^m\le a(x)\le c_2|x|^m, \ \text{for sufficiently large} \ |x|\ \text{and some}\ m\in (-\infty,\infty), \ c_1,c_2>0,
\end{equation}
 the critical exponent $p^*$ for \eqref{probextra} was calculated  in \cite{P97}; it is given by
\begin{equation}\label{97}
p^*=1+\frac{(2+m)^+}n.
\end{equation}
In \eqref{root} we defined $\alpha(\omega,n)$ for $\omega>0$.
We now extend the definition of $\alpha(\omega,n)$ in \eqref{root} to $\omega\ge-\frac14(n-2)^2$. Note that $\alpha(\omega,n)<0$
for $-\frac14(n-2)^2\le\omega<0$. Now define
\begin{equation}\label{08}
p^*(\omega, m)=1+\frac{(2+m)^+}{n+\alpha(\omega,n)}.
\end{equation}
We will prove the following theorem.
\begin{theorem}\label{negative}
Let $n\ge3$ and let $-\frac14(n-2)^2\le\omega<0$.
Consider \eqref{probextra} with $a(x)$ satisfying \eqref{2sided}.
Assume that $V\in C^\alpha(R^n-\{0\})$ and that \newline $\liminf_{x\to0}V(x)>-\infty$.
Let $p^*(\omega,m)$ be as in \eqref{08}.

\noindent i. If $V(x)\ge\frac\omega{|x|^2}$, then
there exist global solutions to \eqref{probextra} for $p>p^*(\omega,m)$;

\noindent ii. If  $V(x)\le\frac\omega{|x|^2}$, for sufficiently
large $|x|$,
  then there are no global solutions
to \eqref{probextra} for $1<p\le p^*(\omega, m)$.
\end{theorem}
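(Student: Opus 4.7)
My plan hinges on a Doob $h$-transform that eliminates the potential $V$ in the canonical case $V = \omega/|x|^2$. The function $h(x) = |x|^{\alpha}$ with $\alpha = \alpha(\omega,n)$ is positive precisely because $\omega \ge -\frac14(n-2)^2$, and satisfies $(\Delta - \omega/|x|^2)h = 0$ on $R^n \setminus \{0\}$. Substituting $u = hw$ in \eqref{probextra} with $V = \omega/|x|^2$ converts the equation into
\begin{equation*}
w_t = \Delta w + 2\alpha\,\frac{x}{|x|^2}\!\cdot\!\nabla w + \tilde a(x)\,w^p, \qquad \tilde a(x) := a(x)\,h(x)^{p-1} \sim |x|^{m+\alpha(p-1)}.
\end{equation*}
Restricted to radial functions, this drift-Laplacian coincides with the radial Laplacian in effective dimension $N := n + 2\alpha$. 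Substituting $\tilde m := m + \alpha(p-1)$ and $N$ into the Fujita exponent $1 + (2+\tilde m)^+/N$, the self-consistent equation $p = 1 + (2+\tilde m)^+/N$ rearranges exactly to $p = p^*(\omega,m)$. This is my heuristic for why \eqref{08} is the correct threshold, and it reduces both (i) and (ii) to the $V\equiv 0$ theory of \cite{P97} applied to the transformed problem in effective dimension $N$.

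For part (i), $V \ge \omega/|x|^2$ and $p > p^*(\omega,m)$: by a monotone comparison (larger $V$ gives a smaller solution) it is enough to treat the canonical case $V = \omega/|x|^2$. Since $p > p^*(\omega,m)$ means the transformed problem is supercritical in dimension $N$ with weight $r^{\tilde m}$, I would build a global supersolution for $w$ via a self-similar ansatz $\bar w(x,t) = (1+t)^{-\sigma} g\!\bigl(x/\sqrt{1+t}\bigr)$ with $\sigma = (2 + \tilde m)/(2(p-1))$ and $g \ge 0$ a decaying Gaussian-type profile; supercriticality is precisely the condition under which the similarity-profile ODE admits such a $g$. Translating back via $u = h\bar w$ yields a global supersolution to \eqref{probextra} with $V = \omega/|x|^2$, and comparison then furnishes global solutions for sufficiently small initial data. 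The singularity of $h$ at the origin (recall $\alpha < 0$) is handled by replacing $h$ on a fixed neighborhood of $0$ by a smooth positive extension $\tilde h$; this changes the effective potential only by a bounded quantity, which is absorbed using the hypothesis $\liminf_{x\to 0}V(x) > -\infty$.

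For part (ii), $V \le \omega/|x|^2$ for large $|x|$ and $1 < p \le p^*(\omega,m)$: I would adapt Kaplan's test-function method using $h$ as the weight. Let $\psi_R$ be a smooth cutoff of $h$, supported in $B_R$ and modified near the origin, and set $F_R(t) := \int u(\cdot,t)\,\psi_R\,dx$. Differentiating along \eqref{probextra} and integrating by parts gives
\begin{equation*}
F_R'(t) = \int u\,\bigl(\Delta \psi_R - V\psi_R\bigr)\,dx + \int a\,u^p\,\psi_R\,dx.
\end{equation*}
On the bulk of the support of $\psi_R$ where $\psi_R = h$, the first integrand equals $(\omega/|x|^2 - V)\,h\,u \ge 0$ by hypothesis; the only contributions of definite sign come from the annular transition region near $\partial B_R$, and they form a boundary-layer error with a favorable $R$-scaling. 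A weighted Jensen inequality applied to the nonlinear term then yields a closed differential inequality for $F_R$; the $R$-scaling of its weights is calibrated, via the same effective-dimension computation as in paragraph one, to force $F_R$ to blow up in finite time whenever $p \le p^*(\omega,m)$, after a suitable choice of radii (possibly $R = R(t) \to \infty$).

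The principal obstacle I anticipate is the borderline case $p = p^*(\omega,m)$ in part (ii): the naive scaling in the Kaplan estimate is precisely at threshold and must be sharpened, typically by iterating the integral inequality with $R$ growing with $t$ or by inserting a logarithmic correction into $\psi_R$, along the lines of the critical-case proofs in \cite{AW}, \cite{KST}, \cite{P97}. A secondary technical issue is justifying the integration by parts and constructing the supersolution in a neighborhood of the origin, where $h$ is singular for $\omega < 0$; this is addressed by the bounded modification of $h$ above together with the local lower bound on $V$.
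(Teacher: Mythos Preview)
Your $h$-transform reduction to effective dimension $N = n + 2\alpha(\omega,n)$ with weight exponent $\tilde m = m + \alpha(p-1)$ is exactly the mechanism the paper uses, and for part~(i) your self-similar super-solution ansatz is, after pulling back via $u = h\bar w$, precisely the paper's explicit super-solution $v(r,t) = \delta\, r^{\alpha}(t+1)^{-\gamma}\exp\!\bigl(-r^2/(4(t+1))\bigr)$. The paper handles the origin singularity differently from your proposal: rather than modifying $h$ near $0$, it takes the pointwise minimum of two such super-solutions, one radial about $0$ with parameter $\omega$ and one radial about a nearby point $x_0\neq 0$ with parameter $\omega-\epsilon$, so that the minimum is finite everywhere.

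For part~(ii) the genuine gap is at the critical exponent $p = p^*(\omega,m)$, and the paper's remedy is not the one you sketch. After the $h$-transform the paper passes to the exterior problem on $r > r_0$ with Dirichlet boundary (this localizes to where $V \le \omega/|x|^2$ actually holds, and incidentally yields the exterior-domain theorem as a byproduct). The key step, absent from your plan, is an a priori lower bound
\[
w(r,t) \;\ge\; C\,t^{-N/2}\,\log(1+t)\,\exp(-Kr^2/t)
\]
for any solution of the transformed problem. The $\log$ factor is produced by heat-kernel analysis: one compares the Dirichlet kernel $\bar q_{(N,r_0)}$ with the whole-line Bessel kernel $q_{(N)}$ via the Grigor'yan--Saloff-Coste transience bound, and then estimates $q_{(N)}$ explicitly through the modified Bessel function $I_{N/2-1}$. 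Only with this lower bound in hand does the paper run the Kaplan/eigenfunction argument, and it does so on annuli $(n,2n)$ rather than balls: one obtains $F_n'(t) \ge -cn^{-2}F_n + c_1 n^M F_n^p$, whose blow-up threshold at criticality is $F_n > C n^{-N}$, and this is met precisely because the lemma gives $F_n(n^2) \ge C n^{-N}\log n$. Your suggestion to recover the critical case by iterating the Kaplan inequality with growing $R$, or by inserting a logarithmic correction into $\psi_R$, is a different route that you leave unsubstantiated; the heat-kernel lower bound is the concrete ingredient that does the work here.
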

\it\noindent Remark.\rm\
Note that in the case of the existence of global solutions, we allow $V$ to be negative up to a precise globally  specified
size. The reason for this will become clear in Theorem \ref{spectral}.

We now consider what happens when $\omega<-\frac14(n-2)^2$, $n\ge2$.
We will show that $p^*=\infty$ under a certain general condition on the operator $-\Delta+V$, and that this condition holds
if $V(x)\le \frac\omega{|x|^2}$, for $|x|>\epsilon$, with sufficiently small $\epsilon>0$.

Let $D\subseteqq R^n$ be a domain.  Then $-\Delta +V$ on $D$ with the Dirichlet boundary condition
on $\partial D$  can be realized as a self-adjoint operator  on $L^2(D)$. Denoting its spectrum by $\sigma(-\Delta +V;D)$, let
\begin{equation*}
\lambda_{0;D}(-\Delta+V)\equiv\inf\sigma(-\Delta +V;D).
\end{equation*}
\begin{theorem}\label{spectral}
If there exists a domain $D\subseteqq R^n$ for which $\inf_{x\in D}a(x)>0$ and $\lambda_{0;D}(-\Delta+V)<0$, then
 there are no global  solutions to \eqref{probextra} for any $p>1$; that is, $p^*=\infty$.
\end{theorem}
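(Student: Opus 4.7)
The plan is to run a Kaplan-type eigenfunction test on a bounded subdomain of $D$. Because $\lambda_{0;D}(-\Delta+V)<0$ and because for $n\ge 2$ the single point $\{0\}$ has zero $H^1$-capacity, the quadratic form $\int(|\nabla\psi|^2+V\psi^2)\,dx$ takes negative values on smooth compactly supported test functions $\psi$ whose support is contained in $D\setminus\{0\}$. Choosing a smooth bounded subdomain $D_0$ with $\mathrm{supp}(\psi)\subset D_0$ and $\overline{D_0}\subset D\setminus\{0\}$ then gives $\lambda_0:=\lambda_{0;D_0}(-\Delta+V)<0$. On such $D_0$ the potential $V$ is H\"older continuous, so classical elliptic theory supplies a positive principal Dirichlet eigenfunction $\varphi_0\in C^{2,\alpha}(\overline{D_0})$ solving $-\Delta\varphi_0+V\varphi_0=\lambda_0\varphi_0$, with $\varphi_0|_{\partial D_0}=0$ and $\partial_\nu\varphi_0<0$ on $\partial D_0$ by Hopf's lemma.

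Now let $u$ be any solution of \eqref{probextra} on $[0,T)$. The strong maximum principle gives $u(\cdot,t)>0$ for $t>0$, since the parabolic equation for $u$ has coefficients bounded on $[\varepsilon,T_0]\times\overline{D_0}$ for every $0<\varepsilon<T_0<T$. Define $F(t):=\int_{D_0}u(\cdot,t)\,\varphi_0\,dx$; then $F(t)>0$ for all $t\in(0,T)$. Two applications of Green's identity yield
\begin{equation*}
F'(t)=-\lambda_0 F(t)+\int_{D_0}a\,u^p\,\varphi_0\,dx-\int_{\partial D_0}u\,\partial_\nu\varphi_0\,dS,
\end{equation*}
since the Dirichlet condition $\varphi_0|_{\partial D_0}=0$ annihilates the boundary contribution from $\partial_\nu u$. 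The remaining boundary integral is non-negative because $u\ge 0$ and $\partial_\nu\varphi_0<0$, so
\begin{equation*}
F'(t)\ge -\lambda_0 F(t)+\int_{D_0}a\,u^p\,\varphi_0\,dx.
\end{equation*}

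Finally, set $c_0:=\inf_{D_0}a>0$ and $M:=\int_{D_0}\varphi_0\,dx$. Jensen's inequality for $x\mapsto x^p$ against the probability measure $M^{-1}\varphi_0\,dx$ gives $\int_{D_0}u^p\varphi_0\,dx\ge M^{1-p}F(t)^p$, so
\begin{equation*}
F'(t)\ge -\lambda_0 F(t)+c_0 M^{1-p}F(t)^p,\qquad t>0.
\end{equation*}
Since $-\lambda_0>0$, $p>1$, and $F(t_0)>0$ for any $t_0\in(0,T)$, elementary ODE comparison forces $F$ to blow up in finite time, contradicting globality. The main obstacle I foresee is the first step---producing a bounded smooth subdomain of $D$ on which the Dirichlet principal eigenvalue is still negative while keeping $\overline{D_0}$ away from the singular point of $V$; once that is secured, the remainder is the standard Kaplan blow-up recipe.
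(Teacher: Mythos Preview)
Your proof is correct and follows essentially the same approach as the paper: both reduce to a bounded smooth subdomain on which the principal Dirichlet eigenvalue is still negative, then run Kaplan's eigenfunction argument with Jensen's inequality to derive a blow-up ODE for $F(t)=\int u\,\varphi_0$. Your version is slightly more careful in explicitly excluding the singular point $\{0\}$ of $V$ via a capacity argument, whereas the paper simply invokes monotonicity and inner continuity of $\lambda_{0;D}$ to pass to a bounded smooth $D$; otherwise the arguments are identical.
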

We can use  Theorem \ref{spectral}  to prove the following corollary.
\begin{corollary}\label{corspectral}
Consider \eqref{probextra} with $a>0$ on $R^n$, $n\ge2$. Let
$\omega<-\frac14(n-2)^2$.
There exists an $\epsilon>0$ such that
if   $V(x)\le\frac\omega{|x|^2}$, for $|x|>\epsilon$,  then
there are no global solutions to \eqref{probextra} for any $p>1$; that is, $p^*=\infty$.
\end{corollary}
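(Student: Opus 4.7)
The plan is to apply Theorem \ref{spectral}, so I must produce a domain $D\subseteq R^n$ on which $\inf_D a>0$ and $\lambda_{0;D}(-\Delta+V)<0$. Since $a$ is continuous and strictly positive on $R^n$, the first requirement is automatic for any domain with compact closure in $R^n$. Thus the corollary reduces to exhibiting a bounded domain $D\subset\{|x|>\epsilon\}$ on which the Dirichlet form of $-\Delta+V$ assumes a negative value.

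The essential analytic input is the sharpness of Hardy's inequality. For $n\ge 3$ it states $\int_{R^n}|\nabla u|^2\,dx\ge\tfrac{(n-2)^2}{4}\int_{R^n}u^2/|x|^2\,dx$ for $u\in C_c^\infty(R^n\setminus\{0\})$, with sharp constant $\tfrac{(n-2)^2}{4}$ that is not attained. Because $-\omega>\tfrac{(n-2)^2}{4}$ by hypothesis, there exists $\psi\in C_c^\infty(R^n\setminus\{0\})$ with
\begin{equation*}
Q(\psi)\;:=\;\int_{R^n}\Bigl(|\nabla\psi|^2+\tfrac{\omega}{|x|^2}\psi^2\Bigr)dx\;<\;0.
\end{equation*}
For the borderline case $n=2$ (where the hypothesis reduces to $\omega<0$), I construct $\psi$ directly using the ansatz $\psi(x)=\phi(\log|x|)$ with a nonnegative bump $\phi\in C_c^\infty(R)$; the substitution $s=\log r$ reduces $Q(\psi)$ to $2\pi\int_R\bigl((\phi')^2+\omega\phi^2\bigr)ds$, which is made negative by taking $\phi$ to have sufficiently wide support (derivative term of order $L^{-1}$, potential term of order $L$).

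Next I exploit the scale invariance of $Q$ to push the support of $\psi$ into the region where the hypothesis $V\le \omega/|x|^2$ applies. Setting $\psi_R(x):=\psi(x/R)$, a direct computation gives $Q(\psi_R)=R^{n-2}Q(\psi)$, so $Q(\psi_R)$ remains strictly negative for every $R>0$ while $\operatorname{supp}\psi_R=R\cdot\operatorname{supp}\psi$. Choosing $R$ large enough that $\operatorname{supp}\psi_R\subset\{|x|>\epsilon\}$ and letting $D$ be any bounded spherical annulus containing this support, the assumption $V\le\omega/|x|^2$ on $D$ gives
\begin{equation*}
\int_D\bigl(|\nabla\psi_R|^2+V\psi_R^2\bigr)dx\;\le\;Q(\psi_R)\;<\;0,
\end{equation*}
so by the variational characterization of the principal eigenvalue, $\lambda_{0;D}(-\Delta+V)<0$. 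Theorem \ref{spectral} then forces $p^*=\infty$.

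The only delicate step is the case $n=2$, where Hardy's inequality degenerates to the trivial assertion $\int|\nabla u|^2\ge 0$; this is why a separate logarithmic ansatz is required to manufacture a negative value of $Q$. Once that point is handled, the remaining ingredients — the scaling identity $Q(\psi_R)=R^{n-2}Q(\psi)$, localization in $\{|x|>\epsilon\}$, the pointwise comparison $V\le\omega/|x|^2$, and the invocation of Theorem \ref{spectral} — are routine.
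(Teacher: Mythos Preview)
Your proof is correct and follows the same overall route as the paper: both produce a bounded annulus on which the quadratic form of $-\Delta+\omega/|x|^2$ is negative, compare with $V\le\omega/|x|^2$ there, and then invoke Theorem~\ref{spectral}. The paper simply cites the fact that $\lambda_{0;R^n\setminus\{0\}}(-\Delta+\omega/|x|^2)<0$ when $\omega<-\tfrac14(n-2)^2$ and then localizes to an annulus $B_k\setminus\bar B_\epsilon$ via the monotone-convergence property $\lambda_{0;D_k}\to\lambda_{0;D}$ as $D_k\uparrow D$; you instead build an explicit test function (via the sharpness of Hardy for $n\ge3$, and the logarithmic ansatz for $n=2$) and localize via the scaling identity $Q(\psi_R)=R^{n-2}Q(\psi)$. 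Your execution is more self-contained---in particular the $n=2$ case is handled explicitly rather than by reference---and the scaling step incidentally shows the conclusion for \emph{every} $\epsilon>0$, a mild strengthening of what the corollary asserts.
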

\begin{Remark 1}
Note that there is a discontinuity in the critical exponent at $\omega=-\frac14(n-2)^2$. By Theorem \ref{negative}, if
$V(x)=-\frac{(n-2)^2}{4|x|^2}$, for sufficiently large $|x|$, and $V(x)\ge-\frac{(n-2)^2}{4|x|^2}$, for all $x$, then
the critical exponent is equal to \linebreak $p^*(-\frac14(n-2)^2,m)=1+\frac{2(2+m)^+}{n+2}$.
However, if $V(x)=\frac\omega{|x|^2}$, for some $\omega<-\frac14(n-2)^2$ and $|x|>\epsilon$, for sufficiently small $\epsilon>0$,
 then the critical exponent
is $\infty$.
\end{Remark 1}
\begin{Remark 2}
Theorem \ref{spectral} makes it clear why in the theorems of  Zhang and of Ishige and in Theorem \ref{negative},
one needed to be careful with regard to stating the existence of  global solutions and allowing $V$ to take negative values.
For example, part (iii) of the theorem of Zhang states that
if $\frac\omega{1+|x|^b}\le V(x)\le 0$ for some $b>2$ and $\omega<0$, with $|\omega|$ sufficiently small, then the critical
exponent for \eqref{prob} is $1+\frac2n$. The requirement that $|\omega|$ be sufficiently small is mandatory in light of
Theorem \ref{spectral}.  Indeed, for any $D\subseteqq R^n$, if $\omega<0$ and $|\omega|$ is sufficiently large, then $\lambda_{0;D}(-\Delta+
\frac\omega{1+|x|^b})<0$
and thus, by Theorem \ref{spectral}, one has $p^*=\infty$.
\end{Remark 2}

The method of proof in Theorem \ref{negative}  also yields the following result for the case $\omega>0$.
\begin{theorem}\label{positive}
Let $n\ge2$ and  $\omega>0$.
Consider \eqref{probextra} with $a(x)$ satisfying \eqref{2sided}.
Assume that $V\in C^\alpha(R^n-\{0\})$.
Let $p^*(\omega,m)$ be as in \eqref{08}.

\noindent i. If $V(x)\ge\frac\omega{|x|^2}$, then there exist global solutions to \eqref{probextra}
for $p>p^*(\omega,m)$.

\noindent ii. If $V(x)\le \frac\omega{|x|^2}$, for sufficiently large $|x|$, then there are no global solutions to \eqref{probextra} for
$1\le p\le p^*(\omega,m)$.
\end{theorem}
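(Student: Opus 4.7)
\emph{Proof proposal.} The plan is to adapt the argument used for Theorem \ref{negative}, noting that its critical computation depends only on the value of $\alpha(\omega,n)$, not on its sign. The key substitution is $u(x,t) = |x|^{\alpha(\omega,n)}\,v(x,t)$, which, thanks to $\alpha(\alpha+n-2)=\omega$, eliminates the Hardy potential in the model equation $V\equiv\omega/|x|^2$. A direct calculation shows that $u$ solves \eqref{probextra} in that model case if and only if $v$ satisfies
\begin{equation*}
v_t = \Delta v + \frac{2\alpha}{|x|^2}\,x\cdot\nabla v + a(x)|x|^{\alpha(p-1)}v^p .
\end{equation*}
On radial functions the diffusion part becomes $\partial_{rr}+(N-1)r^{-1}\partial_r$ with effective dimension $N=n+2\alpha$, and the nonlinear coefficient has effective growth order $\tilde m = m+\alpha(p-1)$. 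Plugging $(N,\tilde m)$ into formula \eqref{97} and solving $p=1+(2+\tilde m)^+/N$ for $p$ recovers exactly $p^*(\omega,m)$ from \eqref{08}. Since $\omega>0$ forces $\alpha>0$, the weight $|x|^\alpha$ vanishes at the origin, so the substitution introduces no singularity there; this makes the present case slightly cleaner than that of Theorem \ref{negative}.

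For part (i), the assumption $V(x)\ge\omega/|x|^2$ yields the pointwise bound $-Vu+au^p\le -\omega u/|x|^2+au^p$, so by the comparison principle it suffices to construct a global positive supersolution of the model equation with $V\equiv\omega/|x|^2$. One writes such a supersolution in the form $U=|x|^\alpha W$, with $W$ a suitable self-similar supersolution for the transformed drift equation; the hypothesis $p>p^*(\omega,m)$ is exactly what allows a power-type profile for $W$ to absorb the nonlinear term at the required decay rate. Comparison against a sufficiently small initial datum then yields a global solution.

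For part (ii), one works in the region where $|x|$ is large and $V(x)\le\omega/|x|^2$, handling a fixed compact set via the local boundedness of $V$. After the substitution $v=|x|^{-\alpha}u$, the task reduces to ruling out global positive solutions of the transformed equation for $1<p\le p^*(\omega,m)$. In the subcritical regime this is immediate from the dichotomy \eqref{97} applied with effective dimension $N$ and weight $\tilde m$, via the standard test-function / eventual-comparison argument on an expanding ball.

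The main obstacle, as in every Fujita-type problem, is the borderline $p=p^*(\omega,m)$, where the crude ODE bounds no longer suffice. The plan there is to transport the refined critical-case blow-up argument of \cite{P97} to the transformed equation, which is precisely Fujita-critical in dimension $N$ with spatial weight $\tilde m$. A minor but necessary technical verification is to justify comparison arguments near $x=0$, where the substitution is only formally smooth; the positivity of $\alpha$ together with the local lower bound on $V$ keeps this routine.
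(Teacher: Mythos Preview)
Your outline for part (i) is essentially the paper's approach: the supersolution used in the paper is precisely $v(r,t)=\delta\, r^{\alpha(\omega,n)}(t+1)^{-\gamma}\exp(-r^2/4(t+1))$, i.e.\ $|x|^{\alpha}$ times a Gaussian self-similar profile, and the computation goes through cleanly because $\alpha>0$ when $\omega>0$.

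Part (ii), however, has a real gap. After your substitution $v=|x|^{-\alpha}u$, the transformed equation has radial part $\partial_{rr}+(N-1)r^{-1}\partial_r$ with $N=n+2\alpha(\omega,n)$. You then invoke the dichotomy \eqref{97} and the critical-case argument of \cite{P97} ``in dimension $N$.'' The problem is twofold. First, $N$ is in general \emph{not an integer}; the proof in \cite{P97} is built on the explicit Gaussian heat kernel in $\mathbb{R}^d$ and does not apply as stated to a fractional-dimensional Bessel operator. Second, the hypothesis $V(x)\le \omega/|x|^2$ holds only for $|x|\ge r_0$, so after comparison you land on the problem \eqref{transformedexact} with a Dirichlet condition at $r=r_0$; this is \emph{not} a local-boundedness issue near the origin but a genuine exterior problem, and blow-up for the Dirichlet problem is strictly harder since solutions are smaller.

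The paper's actual work in Section 3 is devoted exactly to these two points. One needs the explicit Bessel heat kernel $q_{(N)}$ in \eqref{qN}, the Grigor'yan--Saloff-Coste lower bound \eqref{qqbar} comparing the Dirichlet kernel $\bar q_{(N,r_0)}$ to $q_{(N)}$ (valid because $N>2$ here), and then the non-integer reduction \eqref{NN0} bounding $q_{(N)}$ from below by $q_{(N_0)}$ for the next integer $N_0$, at the cost of controlled powers of $\rho$ and $s$. Only after these steps can one feed the estimates of \cite{P97} into the a priori lower bound of Lemma \ref{key} and conclude via the eigenfunction argument \eqref{intbyparts}. Your proposal skips over precisely the content that makes the theorem nontrivial.
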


\noindent \it Remark.\rm\ Note that part (i) requires that $V$ approach $\infty$ as $|x|\to0$. In fact, as Ishige has proven
in the case that $a\equiv1$ and $n\ge3$, the result should hold as long as $V(x)\ge\frac\omega{|x|^2}$, for sufficiently large $|x|$, and
$V\ge 0$ for all $x$. However, our method of proof  does not seem  to be extendable to this situation.

As will be seen below, the method of  proof we employ for the blow-up case in Theorems \ref{negative} and \ref{positive} will lead naturally
to a consideration of the critical exponent for the semilinear heat equation in an exterior domain with the Dirichlet boundary condition and with $V\equiv0$. Let $B_r=\{x\in R^n: |x|<r\}$.
Consider the following problem:
\begin{equation}\label{exterior}
\begin{aligned}
&u_t=\Delta u +au^p \ \text{in}\ (R^n-\bar B_{r_0})\times (0,T);\\
&u(x,t)=0,\ \text{for}\ |x|=r_0,\ t\ge0;\\
&u(x,0)=\phi(x)\gneq0\ \text{in}\ R^n-\bar B_{r_0},
\end{aligned}
\end{equation}
where
\begin{equation}\label{a-ext}
c_1|x|^m\le a(x)\le c_2|x|^m,\ \text{for sufficiently large }\ |x| \ \text{and some}\ m\in(-\infty,\infty), \ c_1,c_2>0.
\end{equation}
We prove that restricting to an exterior domain does not affect the blow-up/global solution dichotomy.
\begin{theorem}\label{extresult}
Let $n\ge2$. Consider \eqref{exterior} with $a(x)$ satisfying \eqref{a-ext}. Let
$$
p^*=1+\frac{(2+m)^+}n
$$
as in \eqref{97}.

\noindent i. If $1\le p\le p^*$, then there exist global solutions to \eqref{exterior};

\noindent ii. If $p>p^*$, then there are no global solutions to \eqref{exterior}.
\end{theorem}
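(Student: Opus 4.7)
The theorem asserts that the blow-up/global-existence dichotomy for the exterior problem \eqref{exterior} is governed by the same critical exponent \(p^{\ast}=1+(2+m)^{+}/n\) as the whole-space problem of \cite{P97}. I would treat the two directions separately, via a comparison/supersolution construction for part (i) and a weighted Kaplan-type test-function argument for part (ii), both adapted to the Dirichlet boundary on \(\partial B_{r_{0}}\).

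For part (i), asserting the existence of global solutions whenever \(1\le p\le p^{\ast}\), the plan is to exhibit a bounded time-dependent supersolution to \eqref{exterior} that controls any small initial datum. The natural candidate is a modification of a heat-kernel-type profile adapted to the exterior geometry, of the form \(U(x,t)=\epsilon(1+t)^{-\beta}G(x,t)\), where \(G\) is built from a Dirichlet heat kernel on \(R^{n}-\bar B_{r_{0}}\) (or from a Gaussian multiplied by a cutoff \(h(x)=1-(r_{0}/|x|)^{n-2}\) vanishing on \(\partial B_{r_{0}}\)), and \(\beta\) is chosen so that the nonlinear contribution \(aU^{p}\) is absorbed by the linear decay of \(U_{t}-\Delta U\). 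Matching the power of \(|x|\) produced by \(a(x)\sim|x|^{m}\) against the Gaussian tail should single out the threshold \(p^{\ast}\). Any \(\phi\le U(\cdot,0)\) then remains dominated by \(U(\cdot,t)\) for all \(t\), yielding a global bounded solution.

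For part (ii), asserting non-existence of global solutions when \(p>p^{\ast}\), the plan is a weighted Kaplan/Fujita-type integral test. Fix \(R\gg r_{0}\), take \(\psi_{R}\) to be the principal Dirichlet eigenfunction of \(-\Delta\) on the annulus \(A_{R}=\{r_{0}<|x|<R\}\) with eigenvalue \(\lambda_{R}\), and set \(F(t)=\int_{A_{R}}u(x,t)\psi_{R}(x)\,dx\). Multiplying the equation by \(\psi_{R}\), integrating by parts (the boundary terms vanish because \(\psi_{R}=0\) on \(\partial A_{R}\) and \(u=0\) on \(|x|=r_{0}\)), and applying Jensen's inequality using the lower bound \(a(x)\ge c_{1}|x|^{m}\) at infinity produces an ODE inequality of the form
\[
F'(t)\;\ge\;C_{R}\,F(t)^{p}\;-\;\lambda_{R}\,F(t),
\]
where \(C_{R}\) depends on a weighted \(L^{q}\)-norm of \(\psi_{R}\) involving \(a^{-1/(p-1)}\). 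Sending \(R\to\infty\) gives \(\lambda_{R}\to 0\), and a scaling balance of \(C_{R}\) against \(\lambda_{R}\) (tracking how \(|x|^{m}\) interacts with the spread of \(\psi_{R}\)) singles out \(p>p^{\ast}\) as the regime in which \(F\) is forced to blow up in finite time.

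The principal obstacle is the careful tracking of the constants \(C_{R}\) and \(\lambda_{R}\) in the limit \(R\to\infty\), and in particular the treatment of the endpoint \(p=p^{\ast}\). On \(R^{n}\) this endpoint is precisely the borderline handled by the refined arguments of \cite{AW,KST,P97}; their analogue on the exterior domain, where one must simultaneously respect the Dirichlet condition at \(\partial B_{r_{0}}\) and the weight \(a(x)\sim|x|^{m}\) at infinity, is where the bulk of the new technical work lies.
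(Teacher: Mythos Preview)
Your proposal has the dichotomy backwards, and this is not merely cosmetic: the very methods you invoke would prove the opposite of what you claim for each part. The theorem as printed has (i) and (ii) interchanged (compare the abstract, the remark citing \cite{BL}, and the paper's own proof in Section~4, all of which are for blow-up when $1<p\le p^{\ast}$ and global existence when $p>p^{\ast}$). A supersolution of the form $\epsilon(1+t)^{-\beta}G(x,t)$ can only survive when the nonlinearity is \emph{subcritical for the decay}, i.e.\ when $p>p^{\ast}$; for $p\le p^{\ast}$ no such $\beta$ exists---that is precisely the content of the Fujita phenomenon. Conversely, the Kaplan eigenfunction inequality $F'\ge C_{R}F^{p}-\lambda_{R}F$ with $\lambda_{R}\sim R^{-2}$ and $C_{R}\sim R^{m}$ forces blow-up exactly when the threshold $(\lambda_{R}/C_{R})^{1/(p-1)}\sim R^{-(m+2)/(p-1)}$ can be overcome by the a~priori size of $F$ at a suitable time, and the balance resolves to $p\le 1+(2+m)/n$, not $p>p^{\ast}$.

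There is also a substantive gap in your blow-up sketch beyond the sign error. Letting $R\to\infty$ drives $\lambda_{R}\to0$, but it also dilutes $F$; the argument only closes if you can show $F_{R}(t_{R})$ exceeds the threshold for some $t_{R}$ depending on $R$. The paper does this via a key a~priori lower bound on any solution of \eqref{exterior}, namely $w(x,t)\ge C\,t^{-n/2}\log(1+t)\exp(-K|x|^{2}/t)$ for $|x|\ge t^{1/2}$ (Lemma~\ref{key} for $n\ge3$, Lemma~\ref{n=2} for $n=2$). This bound is obtained by comparing the Dirichlet heat kernel on the exterior domain with the whole-space kernel using the Grigor'yan--Saloff-Coste estimates \cite{GS} and then iterating once through the Duhamel inequality; the $\log(1+t)$ factor is exactly what handles the endpoint $p=p^{\ast}$. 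Your ODE inequality alone, without this lower bound, cannot reach the critical case. For the (correct) existence direction $p>p^{\ast}$, no exterior supersolution is needed: the Dirichlet solution is dominated by the whole-space solution with the same data, and \cite{P97} already supplies global solutions there.
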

\noindent \it Remark.\rm\ In the case $a\equiv1$, $n\ge3$ and $p\neq p^*$, the result in Theorem \ref{extresult} was proven in
\cite{BL}. For some other works that treat the critical exponent in exterior domains, see
\cite{LZ} and \cite{Z01a}. Most of the results in these papers do not cover the case in which $p$ is equal to the critical exponent.

We end this section  with an outline of the methods used to prove Theorems \ref{negative} and \ref{positive}, concentrating
on the case of nonexistence of global solutions, which is where our method is novel, and leads to a consideration of the critical
exponent in the case of a semilinear heat equation in an exterior domain with the Dirichlet boundary condition.
By standard comparison techniques, it suffices to treat the radially symmetric case.
 Thus, instead of considering solutions $u(x,t)$ of \eqref{probextra} with $a$ satisfying \eqref{2sided},
we may consider solutions $u(r,t)$ of the equation
\begin{equation}\label{probextrarad}
\begin{aligned}
&u_t=u_{rr}+\frac{n-1}ru_r-V(r)u+a(r)u^p\ \text{in}\ (0,\infty)\times(0,T);\\
&u(r,0)=\phi(r)\gneq0 \ \text{in}\ [0,\infty),
\end{aligned}
\end{equation}
where $p>1$, $T\in(0,\infty]$, $\phi$ is bounded and continuous,
$V\in C^\alpha((0,\infty))$ and $\liminf_{r\to0}V(r)>-\infty$, $0\lneq a\in C^\alpha([0,\infty))$, $\alpha\in(0,1]$, with  $a$ satisfying
\begin{equation}\label{r2sided}
c_1r^m\le a(r)\le c_2 r^m,\ \text{for sufficiently large}\ r \ \text{and some}\ m\in(-\infty,\infty),\ c_1,c_2>0.
\end{equation}

For the existence of global solutions when $p>p^*(\omega,m)$ in part (i) of Theorems \ref{negative} and \ref{positive},
we construct a global super-solution to \eqref{probextrarad}.
Note that in general it is much more difficult to use the method of super/sub-solutions to prove blow-up, since
the construction of an appropriate sub-solution  would probably require a reasonable knowledge of the blow-up profile.

We now turn to the
 nonexistence of global solutions when $1<p\le p^*(\omega,m)$ in part (ii) of Theorems \ref{negative} and \ref{positive},
We may assume without loss of generality that the initial data $\phi$ in \eqref{probextrarad}
satisfy $\phi(r)>0$ for all $r>0$. Indeed, if this is not the case, then for $\delta>0$ sufficiently small, we
can consider $\bar u(r,t)\equiv u(r,t+\delta)$, which also satisfies \eqref{probextrarad} and is strictly positive at $t=0$.
We apply a transformation as follows. Let $u$ be a solution to
\eqref{probextrarad} and define  $v(r,t)=r^{-\alpha}u(r,t)$. Let $\psi(r)=r^{-\alpha}\phi(r)$.
Then one calculates that
\begin{equation}\label{transformed}
\begin{aligned}
&v_t=v_{rr}+\frac{n-1+2\alpha}rv_r+\left(\frac{\alpha(\alpha+n-2)}{r^2}-V(r)\right)v+r^{\alpha(p-1)}a(r)v^p\\
& \text{in}\ (0,\infty)\times(0,T);\\
&v(r,0)=\psi(r)>0\ \text{in}\ (0,\infty).
\end{aligned}
\end{equation}
There will be global solutions of $v$ if and only if there are global solutions of $u$; thus it suffices to study
\eqref{transformed}.
In part (ii) of Theorems \ref{negative} and \ref{positive}, we are assuming  that $V(r)\le\frac\omega{r^2}$, for sufficiently large $r$,
say for $r\ge r_0$, where $\omega\ge-\frac14(n-2)^2$.
If one now chooses $\alpha=\alpha(\omega,n)$ as in \eqref{root}, then the coefficient of $v$  in \eqref{transformed}
is nonnegative for $r\ge r_0$. By the comparison principle, the solution to
 that equation dominates the solution to the equation
\begin{equation}\label{transformedexact}
\begin{aligned}
&w_t=w_{rr}+\frac{N-1}rw_r+\hat a(r)w^p\ \text{in}\ (r_0,\infty)\times(0,T);\\
&w(r,0)=\psi(r)>0\ \text{in}\ [r_0,\infty);\\
&w(r_0,t)=0, \ t>0,
\end{aligned}
\end{equation}
where
\begin{equation}\label{N}
N\equiv n+2\alpha(\omega,n)
\end{equation}
and $\hat a(r)=r^{\alpha(p-1)}a(r)$. In terms of $\hat a$, the assumption \eqref{r2sided} on $a$ is
\begin{equation}\label{hatr2sided}
c_1r^M\le \hat a(r)\le c_2 r^M,\ \text{for sufficiently large}\ r, \ c_1,c_2>0,
\end{equation}
where
\begin{equation}\label{M}
M\equiv\alpha(\omega,n)(p-1)+m, \ m\in(-\infty, \infty).
\end{equation}
(The reason we insisted on $\phi(r)>0$ for all $r>0$, and thus also $\psi(r)>0$ for all $r>0$, is that otherwise we could have ended up with $\psi\equiv0$ in \eqref{transformedexact}.)
Thus, it suffices to show that there are no global solutions to \eqref{transformedexact}-\eqref{M}.

Now \eqref{transformedexact}-\eqref{M}  is the  radial version of  \eqref{probextra}-\eqref{2sided} in the case $V\equiv0$, except that we have placed the Dirichlet boundary condition at $r=r_0$ instead of considering the problem
for all $r>0$, and except that
$m$ in \eqref{2sided} is replaced by $M$ and the dimension
$n$ is replaced by the ``dimension'' $N$. (Note from the definition of $\alpha(\omega,n)$ that
one always has $N\ge2$.)
The critical exponent $p^*$ for \eqref{probextra} with $V\equiv0$ and with $a$ satisfying \eqref{2sided} was given in \eqref{97}.
Substituting $N$ and $M$ for $n$ and $m$ in
\eqref{97}, it is not unreasonable to suspect that no global solutions will    exist if
\begin{equation}\label{inequality}
1<p\le1+\frac{(2+M)^+}N=1+\frac{\left(2+\alpha(\omega,n)(p-1)+m\right)^+}{n+2\alpha(\omega,n)}.
\end{equation}

We now solve \eqref{inequality} for $p$.
Consider first the case $\omega<0$, in which case $\alpha(\omega,n)<0$.
Since we are assuming that $p>1$, \eqref{inequality}  will never hold if
$2+\alpha(\omega,n)(p-1)+m\le0$; that is,
if
\begin{equation}\label{calc1}
p\ge1-\frac{2+m}{\alpha(\omega,n)}.
\end{equation}
On the other hand, if
  $2+\alpha(\omega,n)(p-1)+m>0$, then solving \eqref{inequality} for $p$ gives
\begin{equation}\label{calc2}
1<p\le1+\frac{2+m}{n+\alpha(\omega,n)}.
\end{equation}
One can check that for $m>-2$, the right hand side of \eqref{calc2} is strictly less than the right hand side
of \eqref{calc1}. From this fact along with \eqref{calc1} and \eqref{calc2},  we conclude that
\eqref{inequality} holds if and only
 if $1<p\le p^*(\omega,m)$, where
$p^*(\omega,m)$ is as in \eqref{08}.

Now consider the case $\omega>0$, in which case $\alpha(\omega,n)>0$. \linebreak If
$2+\alpha(\omega,n)(p-1)+m>0$,  then solving \eqref{inequality}  as we did above
gives \eqref{calc2}. On the other hand, if $2+\alpha(\omega,n)(p-1)+m\le0$ (which implies that $m<-2$), then
\eqref{inequality} does not hold. Putting these facts together leads again to
\eqref{inequality} holding if and only if $1<p\le p^*(\omega,m)$.

To turn the above argument into a rigorous proof, we need to show that
indeed no global solutions exist for \eqref{transformedexact}-\eqref{M} when
$1<p\le 1+\frac{(2+M)^+}N$.  That is we need to show that the proof in \cite{P97},
which treated the operator $\Delta$ in $R^n$ (whose radial part is $\frac{d^2}{dr^2}+\frac{n-1}r\frac d{dr}$),
can accommodate two changes: (1)  operators of
the form $\frac{d^2}{dr^2}+\frac{N-1}r\frac d{dr}$ with fractional $N$ and (2)  the Dirichlet boundary condition at $r=r_0$, which serves
to make solutions smaller. The proof in \cite{P97} made rather
heavy use of the explicit form of the heat kernel $p(t,x,y)=(4\pi t)^{-\frac n2}
\exp(-\frac{|y-x|^2}{4t})$ for the corresponding  linear operator $\Delta-\frac \partial{\partial t}$ in $R^n$.
In the present case, the corresponding linear operator is
$\frac{\partial^2}{\partial r^2}+\frac{N-1}r\frac \partial {\partial r}-\frac \partial {\partial t}$
with the Dirichlet boundary condition at $r=r_0$. It turns out that if $N>2$ (equivalently, $\omega>-\frac14(n-2)^2$), then
the heat kernel for this operator  is comparable in an appropriate sense to  the heat kernel for
$\frac{\partial^2}{\partial r^2}+\frac{N-1}r\frac \partial {\partial r}-\frac \partial {\partial t}$ on the entire space $r>0$; thus, we will
be able to use this latter heat kernel, which we can exhibit explicitly.
However, this latter heat kernel
is a much less convenient object than the Gaussian heat kernel. In fact, this obstacle prevented us
from using the method of proof in \cite{P97} to prove the existence of global solutions above the critical exponent; hence the use of super-solutions. However,
we were able to use this heat kernel and  amend the nonexistence proof in \cite{P97} at or below the critical value.
When $N=2$ (equivalently, $\omega=-\frac14(n-2)^2$), the heat kernel with the Dirichlet boundary condition is not comparable to the heat kernel on the whole space,
however an appropriate  lower bound is known and sufficient for our needs.

In section 2 we prove the existence of global solutions in part (i) of Theorems \ref{negative}
and \ref{positive}. In section 3 we prove the nonexistence of global solutions in part (ii) of
Theorems \ref{negative} and \ref{positive}.
In section 4 we prove Theorem \ref{extresult}.
In section 5 we prove Theorem \ref{spectral} and Corollary \ref{corspectral}.

\section{Proofs of Part (i) of Theorems \ref{negative} and \ref{positive}}
We assume that $p>p^*(\omega,m)$, where $p^*(\omega,m)$ is as in \eqref{08}.
As noted in the first section of the paper, instead of studying \eqref{probextra} with $a$ satisfying \eqref{2sided},
it suffices to study the radial problem \eqref{probextrarad} with $a$ satisfying \eqref{r2sided}.
By the standard theory, it suffices to exhibit a global super-solution.
We look for such a  super-solution in the form
\begin{equation*}
v(r,t)=\delta \frac{r^\alpha}{(t+1)^\gamma}\exp(-\frac{cr^2}{t+1}),
\end{equation*}
for some $\delta, c>0$ and some $\alpha,\gamma\in(-\infty,\infty)$.
We have
\begin{equation}\label{vr}
v_r=(\frac\alpha r-\frac{2cr}{t+1})v;
\end{equation}
\begin{equation}\label{vrr}
v_{rr}=(\frac{\alpha^2}{r^2}+\frac{4c^2r^2}{(t+1)^2}-\frac{4c\alpha}{t+1}-\frac\alpha{r^2}-\frac{2c}{t+1})v;
\end{equation}
\begin{equation}\label{vt}
v_t=(-\frac\gamma{t+1}+\frac{cr^2}{(t+1)^2})v.
\end{equation}
The condition on $V$ in part (i) of Theorems \ref{negative} and \ref{positive} is that
 $V(r)\ge\frac\omega{r^2}$, with $-\frac{(n-2)^2}4\le\omega<0$  in Theorem \ref{negative} and $\omega>0$ in
Theorem \ref{positive}.
Using this  along  with \eqref{vr}, \eqref{vrr} and \eqref{vt}, we have
\begin{equation}\label{supersolu}
\begin{aligned}
&v^{-1}(v_{rr}+\frac{n-1}rv_r-V(r)v-v_t+a(r)v^p)\le\\
& (4c^2-c)\frac{r^2}{(t+1)^2}+\frac{\alpha^2+(n-2)\alpha-\omega}{r^2}
+\frac{\gamma-4c\alpha-2cn}{t+1}\\
&+\delta^{p-1}a(r)\frac{r^{\alpha(p-1)}}{(t+1)^{\gamma(p-1)}}\exp(-\frac{c(p-1)r^2}{t+1}).
\end{aligned}
\end{equation}
In order to make the first term on the right hand side of \eqref{supersolu} vanish, we choose
$c=\frac14$, and in order to make the second term on the right hand side of \eqref{supersolu} vanish,
we choose $\alpha=\alpha(\omega,n)$ as in \eqref{root}.

If $m\le0$, the  assumption on $a$ in \eqref{r2sided} guarantees that for some $C>0$, $a(r)\le Cr^m$,  for all $r>0$.
If $m>0$, the assumption on $a$ in \eqref{r2sided} guarantees that for some $C>0$, $a(r)\le C(r\vee1)^m$, for all $r>0$.
This forces us to  break up the next part  of the proof into two cases. We will continue the proof under the assumption
that $m\le0$. After the completion of this case, it will be easy to point out how to handle the case
$m>0$.

Since $a(r)\le Cr^m$,
the final term on the right hand side of \eqref{supersolu} (with $c=\frac14$ and $\alpha=\alpha(\omega,n)$) is  bounded from above by
$C\delta^{p-1}\frac{r^{\alpha(\omega,n)(p-1)+m}}{(t+1)^{\gamma(p-1)}}\exp(-\frac{(p-1)r^2}{4(t+1)})$.
Letting $z=\frac{r^2}{t+1}$, this upper bound can  be written as
$$\frac{C\delta^{p-1}z^{\frac12\alpha(\omega,n)(p-1)+\frac12m}\exp(-\frac14(p-1)z)}{(t+1)^{(\gamma-\frac12\alpha(\omega,n))(p-1)-\frac12m}},
$$
 which is itself  bounded from above by $\frac{C_1C\delta ^{p-1}}{(t+1)^{(\gamma-\frac12\alpha(\omega,n))(p-1)-\frac12m}}$, where \linebreak
$C_1=\sup_{z>0}z^{\frac12\alpha(\omega,n)(p-1)+\frac12 m}\exp(-\frac14(p-1)z)$.
In light of the above analysis, it follows from \eqref{supersolu} that
\begin{equation}\label{v}
v(r,t)=\delta \frac{r^{\alpha(\omega,n)}}{(t+1)^\gamma}\exp(-\frac{r^2}{4(t+1)})
\end{equation}
satisfies
\begin{equation}\label{supersolu2}
\begin{aligned}
&v^{-1}(v_{rr}+\frac{n-1}rv_r-V(r)v-v_t+a(r)v^p)\le\\
&\frac{\gamma-\alpha(\omega,n)-\frac12n}{t+1}+\frac{C_1C\delta^{p-1}}{(t+1)^{(\gamma-\frac12\alpha(\omega,n))(p-1)-\frac12m}}.
\end{aligned}
\end{equation}
If
\begin{equation}\label{gamma1}
\gamma-\alpha(\omega,n)-\frac12n<0
\end{equation}
and
\begin{equation}\label{gamma2}
(\gamma-\frac12\alpha(\omega,n))(p-1)-\frac12m\ge1,
\end{equation}
then after choosing $\delta>0$ sufficiently small, the right hand side of \eqref{supersolu2} will be non-positive.
The two inequalities \eqref{gamma1} and \eqref{gamma2} together are equivalent to
\begin{equation*}
\frac12\alpha(\omega,n)+\frac{1+\frac12 m}{p-1}\le \gamma<\alpha(\omega,n)+\frac12n,
\end{equation*}
and this latter pair of  inequalities can be solved for $\gamma$ if and only if
$\alpha(\omega,n)+\frac{2+m}{p-1}<2\alpha(\omega,n)+n$, or equivalently,
if and only if
$p>1+\frac{2+m}{n+\alpha(\omega,n)}$. Since we have assumed from the outset that $p>1$, we conclude that
if  $p>1+\frac{(2+m)^+}{n+\alpha(\omega,n)}=p^*(\omega,m)$, then
 it is possible
to choose $\gamma$ so that  \eqref{gamma1}
and \eqref{gamma2} hold.

In the case $m>0$, we have $a(r)\le Cr^m$, if $r\ge1$, and $a(r)\le Cr^0$, if $0<r<1$.
Thus, in order for the above analysis to go through in this case, we need to have
\eqref{gamma2} hold as it is written and also with $m$ replaced by 0. However, since $m>0$,
if \eqref{gamma2} holds as it is written, then it holds a fortiori with $m$ replaced by 0.

In the case $\omega>0$,
the function $v$ given by \eqref{v} with $\delta>0$ sufficiently small and $\gamma$ chosen to
satisfy \eqref{gamma1} and \eqref{gamma2} serves as an appropriate global  super-solution.

In the case $\omega<0$, there is one technical problem; namely,
that $\alpha(\omega,n)<0$ and thus $v$ is not finite at $r=0$.
This artificial singularity arises from the use of polar
coordinates. Unfortunately, if one replaces $r$ by $r+c$ for some
$c>0$, then $v$ will no longer be a super-solution. Thus, we argue
as follows. Consider $\omega$ and $p>p^*(\omega,n)$ as fixed. Our
work so far allows us to conclude that for sufficiently small initial data
$\phi$, the solution $u(x,t)$ of \eqref{probextra} satisfies
$u(x,t)\le v(|x|,t)$ up until some possibly finite blow-up time.
Choose  $\epsilon>0$ sufficiently small  so that
$p>p^*(\omega-\epsilon,n)$. The function $v$ in \eqref{v} was
shown to be a super-solution  for \eqref{probextra} under the
assumption that the potential $V$ satisfies $V(x)\ge\frac\omega{|x|^2}$.
Recall that in \eqref{probextra} we are also assuming that $V$ is locally bounded from below.
Therefore, there exists an $r_0>0$ such that $V(x)\ge\frac\omega{r_0^2}$ for $|x|\le r_0$.
One can check that it is then possible to
choose an $x_0\neq0$  such  that
$V(x)\ge\frac{\omega-\epsilon}{|x-x_0|^2}$. Now consider the
radial version \eqref{probextrarad} of \eqref{probextra} but with
the origin shifted to the point $x_0$. Call the new radial variable
$\rho=|x-x_0|$. Since we have $V(\rho)\ge\frac{\omega-\epsilon}{\rho^2}$,
the construction above shows that there exists a function $\hat
v(\rho,t)=\hat\delta \frac{\rho^{\alpha(\omega-\epsilon,n)}}{(t+1)^{\hat
\gamma}}\exp(-\frac{\rho^2}{4(t+1)})$ such that for sufficiently small
initial data $\phi$, the solution $u(x,t)$ of \eqref{probextra}
 satisfies $u(x,t)\le \hat v(|x-x_0|,t)$ up until some possibly finite blow-up
time.  We conclude that for sufficiently small initial data $\phi$, the solution
$u(x,t)$ of \eqref{probextra} satisfies
$u(x,t)\le \hat v(|x-x_0|,t)\wedge v(|x|,t)$ up until  its blow-up time. But the right hand side
is finite for all $x$ and $t$. Thus $u$ is in fact a global solution.

\section{Proofs of Part (ii) of Theorems \ref{negative} and \ref{positive}}
As was shown at the end of section 1, in order to prove that when $1<p\le p^*(\omega,m)$ there are no global solutions to \eqref{probextra}
with $a$ satisfying \eqref{2sided}, it suffices to show that there are no global solutions for \eqref{transformedexact}-\eqref{M}
when $p$ satisfies \eqref{inequality}.
We will always assume that $M>-2$ since otherwise there is nothing to prove.
We wish to employ the method of proof used in \cite{P97}. This method requires a fairly explicit knowledge
of the heat kernel for the corresponding linear equation.
In the present case, the linear equation is $W_t=W_{rr}+\frac{N-1}rW_r$ with $(r,t)\in (r_0,\infty)\times(0,\infty)$, for some possibly fractional $N$ with
$N\ge2$,
and with the Dirichlet boundary condition at $r=r_0$.
Denote the heat kernel for this equation by  $\bar q_{(N, r_0)}(t,r,\rho)$.

Denote  by $q_{(N)}(t,r,\rho)$
  the heat kernel for the equation
$W_t=W_{rr}+\frac{N-1}rW_r$ with $(r,t)\in (0,\infty)\times(0,\infty)$.
The kernel $q_{(N)}(t,r,\rho)$ is the transition probability density for the Bessel process of order $N$, and  is given by \cite{IW}
\begin{equation}\label{qN}
q_{(N)}(t,r,\rho)=\exp(-\frac{r^2+\rho^2}{4t})\frac {\rho^{N-1}}{2t(r\rho)^{\frac N2-1}}I_{\frac N2-1}(\frac{r\rho}{2t}),
\end{equation}
where $I_\nu$ is the modified Bessel function of order $\nu$, given by
\begin{equation}\label{modbess}
I_\nu(x)=(\frac x2)^\nu\sum_{n=0}^\infty\frac{(\frac x2)^{2n}}{n!\Gamma(\nu+n+1)}.
\end{equation}
By the maximum principle, $\bar q_{(N,r_0)}(t,r,\rho)\le q_{(N)}(t,r,\rho)$.
What we need, however, is an appropriate inequality in the reverse direction.

If $N>2$ (equivalently, $\omega>-\frac14(n-2)^2$), then the Bessel process corresponding to the operator $\frac {d^2}{dr^2}+\frac{N-1}r\frac d{dr}$ is transient \cite{P95}. Furthermore,
as will be explained momentarily,
 the uniform
parabolic Harnack inequality holds for the heat equation $W_t=W_{rr}+\frac{N-1}rW_r$ on $r>0$.
Thus,  it follows from \cite{GS}  that there exist constants $K_0,c>0$ such that
\begin{equation}\label{qqbar}
\bar q_{(N, r_0)}(t,r,\rho)\ge  cq_{(N)}(K_0t,r,\rho), \ \text{for}\ r>r_0+1, \ \rho>r_0+1, \  t>0\ \text{and}\ N>2.
\end{equation}
(The uniform parabolic Harnack inequality concerns nonnegative solutions $W$ of $W_t=W_{rr}+\frac{N-1}rW_r$ on $r>0$ on a time interval
$[\tau,\tau+T]$ . See \cite[Definition 2.2]{GS}
for the precise definition. Any such solution can be represented as
$W(r,\tau+t)=\int_0^\infty q_{(N)}(t,r,\rho)W(\rho,\tau)d\rho$, $0\le t\le T$.
Using the explicit
formula for $q_{(N)}$ in \eqref{qN}, one can verify the uniform parabolic Harnack inequality. Indeed, in the case that $N$ is an integer,
the above heat equation is just the radial form of the standard heat equation on $R^N$, and it is well-known that
the uniform Harnack inequality holds in this case \cite{LY}.)

The following key a priori lower bound on  solutions to \eqref{transformedexact}-\eqref{M}
in the case that $1<p\le 1+\frac{(2+M)^+}N$ will be used to prove the theorem. Then we will come back to prove the lemma.
\begin{lemma}\label{key}
Let $w$ be a  solution to \eqref{transformedexact}-\eqref{M}  on a time interval $0<t<T$,  with $1<p\le 1+\frac{(2+M)^+}N$ and $N>2$.
Then for some $K,C>0$,
\begin{equation}\label{oldresult2}
 w(r,t)\ge Ct^{-\frac N2}\log(1+t)\exp(-\frac{Kr^2}t),\ \text{for}\ 2<t<T,\ r>r_0+1.
\end{equation}
\end{lemma}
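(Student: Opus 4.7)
The plan is to extract the stated bound by iterating the Duhamel representation once, starting from a Gaussian lower bound that comes from the strict positivity of $\psi$, and exploiting the borderline condition $p\le 1+(2+M)^+/N$ to turn the resulting time integral into a logarithm. First, I would write Duhamel's formula for \eqref{transformedexact},
\begin{equation*}
w(r,t)=\int_{r_0}^\infty \bar q_{(N,r_0)}(t,r,\rho)\psi(\rho)\,d\rho+\int_0^t\!\!\int_{r_0}^\infty \bar q_{(N,r_0)}(t-s,r,\rho)\hat a(\rho)w(\rho,s)^p\,d\rho\,ds,
\end{equation*}
and derive a baseline lower bound on $w$. Since $\psi>0$ everywhere, we can find $r_0+1<r_1<r_2<\infty$ and $a_0>0$ with $\psi\ge a_0\mathbf 1_{[r_1,r_2]}$; combining \eqref{qqbar}, \eqref{qN}, and the small-argument expansion $I_\nu(x)\sim (x/2)^\nu/\Gamma(\nu+1)$ from \eqref{modbess} then gives a lower bound of the form
\begin{equation*}
w(r,t)\ge C_0\,t^{-N/2}\exp\bigl(-\kappa\, r^2/t\bigr),\qquad r>r_0+1,\; t\ge t_0,
\end{equation*}
for some positive constants $C_0,\kappa,t_0$.

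I would then feed this baseline bound back into the Duhamel nonlinear term. Using \eqref{qqbar} once more and the lower bound $\hat a(\rho)\ge c_1\rho^M$ for $\rho$ large coming from \eqref{hatr2sided}, this yields, for $t\ge 2t_0$ and $r>r_0+1$,
\begin{equation*}
w(r,t)\ge C_1\int_{t_0}^{t/2}\!\!\int_R^\infty q_{(N)}\bigl(K_0(t-s),r,\rho\bigr)\,\rho^M s^{-Np/2}\exp\!\bigl(-p\kappa\rho^2/s\bigr)\,d\rho\,ds.
\end{equation*}
For $s\in[t_0,t/2]$ one has $t-s\asymp t$, so the Gaussian factor $\exp(-r^2/(4K_0(t-s)))$ in \eqref{qN} contributes the required factor $\exp(-Kr^2/t)$. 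The remaining $\rho$ integral reduces, after evaluating $I_{N/2-1}$ in the range where its small-argument expansion is valid, to a Gaussian moment of order $N-1+M$ whose effective decay rate in $\rho$ is $\beta\asymp 1/s$. A direct computation shows that the $\rho$ integral is comparable to $t^{-N/2}s^{(N+M)/2}\exp(-Kr^2/t)$, so that the $s$-integrand is comparable to $t^{-N/2}s^{(M-N(p-1))/2}\exp(-Kr^2/t)$.

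The decisive step is the arithmetic at $p=p^*=1+(2+M)^+/N$: the exponent $(M-N(p-1))/2$ equals $-1$ exactly at the threshold, so integrating in $s$ on $[t_0,t/2]$ produces precisely the factor $\log(1+t)$. For $1<p<p^*$ the $s$-integrand has a strictly larger exponent, producing a power of $t$ that dominates $\log(1+t)$, so the same bound holds a fortiori. The main obstacle is the careful asymptotic analysis of $q_{(N)}$ for fractional $N>2$: one must verify that the small-argument expansion of $I_{N/2-1}(r\rho/(2K_0(t-s)))$ is applicable in the regime where the Gaussian factor $\exp(-(r^2+\rho^2)/(4K_0(t-s)))$ dominates, and one must track the constants so that the exponent $K$ in the final Gaussian is uniform for all $r>r_0+1$ and $t\in(2,T)$. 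A secondary nuisance is ensuring that \eqref{qqbar}, which is stated for $\rho>r_0+1$, can be applied when the $\rho$ integral is taken over $[R,\infty)$ for some $R>r_0+1$; this is handled by choosing $R$ from the outset and absorbing the discarded piece into the constants.
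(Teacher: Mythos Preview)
Your approach is correct and follows the same overall strategy as the paper: bound $w$ below by the linear solution, use the heat-kernel comparison \eqref{qqbar} to pass from $\bar q_{(N,r_0)}$ to $q_{(N)}$, and feed the resulting Gaussian lower bound once through the nonlinear Duhamel term to produce the logarithm at the threshold $p=1+(2+M)/N$.

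Where you diverge from the paper is in the treatment of $q_{(N)}$ for non-integral $N$. The paper does \emph{not} work with $q_{(N)}$ directly. Instead it observes that $K_\nu(x)=(x/2)^{-\nu}I_\nu(x)$ is decreasing in $\nu$, which yields the comparison $q_{(N)}(t,r,\rho)\ge (2t)^{\beta/2}\rho^{-\beta}q_{(N_0)}(t,r,\rho)$ with $N_0=\lceil N\rceil$ and $\beta=N_0-N$; the extra factors are then absorbed into the $\rho$- and $s$-integrals, and the whole computation is reduced to the integer-$N_0$ case already carried out in \cite{P97}. Your route is to bound $I_{N/2-1}$ below by its leading series term and carry out the Gaussian moment calculation from scratch. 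This is more self-contained (no appeal to \cite{P97}), at the price of redoing the $\rho$-integral; the paper's route is shorter because it recycles the earlier work, at the price of the somewhat opaque $\nu$-monotonicity trick.

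One clarification that actually makes your life easier: the ``main obstacle'' you flag is not an obstacle at all. From \eqref{modbess} every term of the series for $I_\nu$ is nonnegative, so $I_\nu(x)\ge (x/2)^\nu/\Gamma(\nu+1)$ holds for \emph{all} $x>0$, not merely in the small-argument regime. Hence the inequality
\[
q_{(N)}(t,r,\rho)\;\ge\; c_N\,t^{-N/2}\rho^{N-1}\exp\!\Bigl(-\frac{r^2+\rho^2}{4t}\Bigr)
\]
is global, and no regime-matching is required. With this, the $\rho$-integral over $[R,\infty)$ is a genuine Gaussian moment (note $N-1+M>-1$ since $N>2$ and $M>-2$), the extra factor $\exp(-\rho^2/(4K_0(t-s)))$ is harmless because $\rho\lesssim\sqrt{s}\le\sqrt{t/2}$ on the effective support while $t-s\asymp t$, and the rest of your arithmetic goes through as written. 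A small bookkeeping point: the Duhamel identity should be written as an inequality $w\ge\cdots$ (cf.\ the paper's \eqref{intequ}), and for $t$ in a bounded range near $2$ you cover \eqref{oldresult2} directly from the baseline bound by absorbing $\log(1+t)$ into the constant.
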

\noindent \it Remark.\rm\ The proof of Lemma \ref{key} makes use of \eqref{qqbar}. If $N=2$, a weaker lower bound holds
for $\bar q_{(N, r_0)}$ in terms of $q_{(N)}$. This weaker bound is enough to prove \eqref{oldresult2} when $N=2$
with the restriction that $r\ge t^\frac12$.
 See Lemma \ref{n=2}
and \eqref{GS2}. As the proof of Theorem \ref{negative} below shows, it is enough to have the estimate \eqref{oldresult2} for
$r\ge t^\frac12$.

In light of the above remark, \eqref{oldresult2} holds  for all $N\ge2$ and $r\ge t^\frac12$. We now use this to prove the theorem.
\begin{proof}[Proof of Theorem \ref{negative}]
Assume that $w(r,t)$ is a global solution to \eqref{transformedexact}-\eqref{M}.
For $n>r_0+1$, define
\begin{equation*}
F_n(t)=\int_n^{2n}w(r,t)\phi^{(n)}(r)r^{N-1}dr,
\end{equation*}
where $\phi^{(n)}>0$, normalized by $\int_n^{2n}\phi^{(n)}(r)r^{N-1}dr=1$, is the eigenfunction corresponding to the principal eigenvalue $\lambda_n>0$
for the operator  $-(\frac{d^2}{dr^2}+\frac{N-1}r\frac d{dr})=-r^{1-N}\frac d{dr} r^{N-1}\frac d{dr}$ on $(n,2n)$ with the Dirichlet boundary condition at the endpoints.
For an appropriate value of $n$, we will show that $F_n$ blows up in finite time, thereby contradicting the assumption that $w$ is
a global solution.

From the outset, we assume that $n$ is sufficiently large so that \eqref{hatr2sided} holds for $r\ge n$.
Simple scaling shows that $\lambda_n$ is on the order $\frac1{n^2}$ as $n\to\infty$. In particular then, there exists a constant $c>0$ such
that
$\lambda_n\le \frac c{n^2}$.
Since $\phi^{(n)}(n)=\phi^{(n)}(2n)=0$, one has  $(\phi^{(n)})'(n)\ge0$ and $(\phi^{(n)})'(2n)\le0$.
Using the facts in this paragraph, integrating by parts and using Jensen's inequality, we have
\begin{equation}\label{intbyparts}
\begin{aligned}
&F_n'(t)=\int_n^{2n}w_t(r,t)\phi^{(n)}(r)r^{N-1}dr\\
&=\int_n^{2n}\left(w_{rr}(r,t)+\frac{N-1}rw_r(r,t)+\hat a(r)w^p(r,t)\right)\phi^{(n)}(r)r^{N-1}dr\\
&=\int_n^{2n}(r^{N-1}w_r(r,t))_r\phi^{(n)}(r)dr+\int_n^{2n}\hat a(r)w^p(r,t)\phi^{(n)}(r)r^{N-1}dr\\
&\ge\int_n^{2n}(r^{N-1}\phi^{(n)}_r(r))_rw(r,t)dr+c_1n^M\int_n^{2n}w^p(r,t)\phi^{(n)}(r)r^{N-1}dr\\
&=-\lambda_n F_n(t)+c_1n^M\int_n^{2n}w^p(r,t)\phi^{(n)}(r)r^{N-1}dr\\
&\ge-\frac c{n^2}F_n(t)+c_1n^MF_n^p(t).
\end{aligned}
\end{equation}

The function $-\frac c{n^2} x+c_1n^Mx^p$ is both positive and increasing for $x>(\frac c{c_1})^{\frac1{p-1}}n^{-\frac{M+2}{p-1}}$. Therefore,
if there exists an $n$ and
a $T_n$ for which $F_n(T_n)>(\frac c{c_1})^{\frac1{p-1}}n^{-\frac{M+2}{p-1}}$, then it follows from \eqref{intbyparts}
and the fact that $p>1$ that $F_n(t)$ will blow up at some finite
value of $t$. From Lemma \ref{key} and the remark following it, we obtain $w(r,n^2)\ge C_1n^{-N}\log n$, for $n\le r\le 2n$ and some $C_1>0$.
Thus, $F_n(n^2)\ge C_1n^{-N}\log n$. Since $1<p\le 1+\frac{(2+M)^+}N$, one can choose $n$ sufficiently large so that
$F_n(n^2)\ge C_1n^{-N}\log n>(\frac c{c_1})^{\frac1{p-1}}n^{-\frac{M+2}{p-1}}$.
\end{proof}
\begin{proof}[Proof of Lemma \ref{key}]
The solution $W$ to the corresponding linear problem $W_t=W_{rr}+\frac{N-1}rW_r$  with the Dirichlet boundary condition at $r=r_0$ and
with initial data $\psi$ is given by
\begin{equation}\label{linear}
W(r,t)=\int_{r_0}^\infty \bar q_{(N, r_0)}(t,r,\rho)\psi(\rho)d\rho.
\end{equation}
By comparison, the solution $w$ to \eqref{transformedexact}-\eqref{M} satisfies
\begin{equation}\label{est-w}
w\ge W.
\end{equation}
On the other hand, the solution $w$ to \eqref{transformedexact}-\eqref{M} satisfies the inequality
\begin{equation}\label{intequ}
w(r,t)\ge\int_{r_0}^\infty \bar q_{(N,r_0)}(t,r,\rho)\psi(\rho)d\rho+\int_0^tds\int_{r_0}^\infty d\rho\
\bar q_{(N,r_0)}(t-s,r,\rho)\hat a(\rho)w^p(\rho,s).
\end{equation}
(See \cite{P97} and \cite{Z99}, where it is also shown that under appropriate conditions, \eqref{intequ} holds with an equality.)
Without loss of generality, we assume that $r_0+2$ is contained in the support of $\psi$ appearing in \eqref{linear}.
From
\eqref{qN}-\eqref{qqbar} and \eqref{linear}-\eqref{intequ} it then follows  that
\begin{equation}\label{keyest}
w(r,t)\ge c_1\int_0^tds\int_{r_0+1}^\infty d\rho\ q_{(N)}(K_1(t-s),r,\rho)\hat a(\rho)q^p_{(N)}(K_1s,\rho,r_0+2),\ r>r_0+1,
\end{equation}
for some $K_1,c_1>0$.

In the case that $N$ is an integer, which we denote by $N_0$, $q_{(N_0)}$ is just the standard $N_0$-dimensional Gaussian heat kernel in radial
coordinates, and \eqref{keyest} can be converted back to $N_0$-dimensional Euclidean coordinates. In \cite{P97}, the right
hand side of \eqref{keyest} (converted to Euclidean coordinates and with some other inessential differences) was shown to satisfy the inequality
\begin{equation}\label{oldresult1}
\begin{aligned}
&\int_1^{\frac t2}ds\int_{r_0+1}^\infty d\rho\ q_{(N_0)}(K_1(t-s),r,\rho)\hat a(\rho)q^p_{(N_0)}(K_1s,\rho,r_0+2)\ge\\
&\begin{cases}&
Ct^{1-\frac {N_0}2p+\frac M2}\exp(-\frac{Kr^2}t),\ \text{if}\ p<1+\frac{2+M}{N_0},\\
& Ct^{-\frac {N_0}2}\log(1+t)\exp(-\frac{Kr^2}t),\ \text{if}\ p=1+\frac{2+M}{N_0},
\end{cases}\\
& \text{for}\ t>2,\ r>r_0+1,
\end{aligned}
\end{equation}
where $K,C>0$.
Recall that we are assuming that $M>-2$.
Note that $1-\frac {N_0}2 p+\frac M2>-\frac {N_0}2$, if $p<1+\frac{2+M}{N_0}$, and
$1-\frac {N_0}2p+\frac M2=-\frac {N_0}2$, if $p=1+\frac{2+M}{N_0}$.
Thus,
from \eqref{keyest} and \eqref{oldresult1} it follows immediately that
\eqref{oldresult2} holds for $N=N_0$.
 (For \eqref{oldresult1} and \eqref{oldresult2} with $N=N_0$, see  the statements and proofs of  \cite[Lemma 2, Proposition 1 and Lemma 3]{P97}.
 The spatial integral in \cite{P97} is over all of $R^{N_0}$, which would correspond here to $\rho>0$. But one could have worked just as well
 with $|x|>r_0+1$ in \cite{P97}, so the restriction here to $\rho>r_0+1$ in the spatial integral causes no problem.)

We now proceed to demonstrate that  \eqref{oldresult1}, and consequently also \eqref{oldresult2}, continue to hold
 in the case that $N_0$ is replaced by any non-integral $N>2$. We write $N=N_0-\beta$, where
$N_0\ge3$ is an integer and $\beta\in(0,1)$.
Let $K_\nu(x)\equiv(\frac x2)^{-\nu}I_\nu(x)$, and note from the definition of $I_\nu$  in \eqref{modbess} that $K_\nu(x)$ is decreasing
in $\nu$. Thus, we have from \eqref{qN},
\begin{equation}\label{NN0}
\begin{aligned}
&q_{(N)}(t,r,\rho)=\exp(-\frac{r^2+\rho^2}{4t})\frac{\rho^{N-1}}{2t(r\rho)^{\frac N2-1}}(\frac{r\rho}{2t})^{\frac N2-1}K_{\frac N2-1}(\frac{r\rho}{2t})\\
&=\exp(-\frac{r^2+\rho^2}{4t})\frac{\rho^{N_0-1}}{2t(r\rho)^{\frac {N_0}2-1}}(\frac{r\rho}{2t})^{\frac {N_0}2-1}
K_{\frac N2-1}(\frac{r\rho}{2t})\left(\frac{\rho^{-\beta}}{(r\rho)^{-\frac\beta2}}(\frac{r\rho}{2t})^{-\frac\beta2}\right)\\
&\ge\frac{(2t)^{\frac\beta2}}{\rho^\beta}q_{(N_0)}(t,r,\rho).
\end{aligned}
\end{equation}
From \eqref{NN0} we have
\begin{equation}\label{final}
\begin{aligned}
&q_{(N)}(K_1(t-s),r,\rho)q^p_{(N)}(K_1s,\rho,r_0+2)\\
&\ge C_1\frac{t^\frac\beta2}{\rho^\beta}s^{\frac\beta2p}q_{(N_0)}(K_1(t-s),r,\rho)q^p_{(N_0)}(K_1s,\rho,r_0+2), \\
&\text{for}\ 1\le s\le \frac t2,\ 0\le\rho<\infty,
\end{aligned}
\end{equation}
for some $C_1>0$.
From \eqref{final} we have
\begin{equation}\label{connectNN0}
\begin{aligned}
&\int_1^{\frac t2}ds\int_{r_0+1}^\infty d\rho\ q_{(N)}(K_1(t-s),r,\rho)\hat a(\rho)q^p_{(N)}(K_1s,\rho,r_0+2)\ge\\
&C_1t^{\frac\beta2}\int_1^{\frac t2}ds\int_{r_0+1}^\infty d\rho\ \rho^{-\beta}s^{\frac\beta2p}q_{(N_0)}(K_1(t-s),r,\rho)\hat a(\rho)q^p_{(N_0)}(K_1s,\rho,r_0+2).
\end{aligned}
\end{equation}

Note that the only difference between the terms appearing inside the  double integral on the right hand side of \eqref{connectNN0} and the
terms appearing inside the double integral on the left hand side of \eqref{oldresult1}
is the addition of the factors $\rho^{-\beta}$ and $s^\frac\beta2$.
Translating the setup and notation in the proof of \eqref{oldresult1} in \cite{P97} to the present situation, we note that the integration over $\rho$
introduced a term of the form $((t-s)r(s,t))^{\frac M2}$, where $r(s,t)=\frac s{s+pK_2(t-s)}$, for some $K_2>0$, and the exponent $\frac M2$
was a consequence of  $\hat a$ being on the order $\rho^M$. Since $\hat a(\rho)$ is replaced
by $\rho^{-\beta}\hat a(\rho)$ in \eqref{connectNN0},  in the present situation we obtain a term of the form
$((t-s)r(s,t))^{\frac M2-\frac\beta2}$; see \cite[(2.34)-(2.37)]{P97}.
Thus, whereas in  the penultimate step in the proof of \eqref{oldresult1} in \cite{P97} we obtained
\begin{equation*}
\begin{aligned}
&\int_1^{\frac t2}ds\int_{r_0+1}^\infty d\rho\ q_{(N_0)}(K_1(t-s),r,\rho)\hat a(\rho)q^p_{(N_0)}(K_1s,\rho,r_0+2)\ge\\
&C_2\exp(-\frac {Kr^2}t)\int_1^{\frac t2}s^{-\frac {N_0}2p}(r(s,t))^{\frac {N_0}2+\frac M2}(t-s)^\frac M2ds,
\end{aligned}
\end{equation*}
for some $K>0$ (see \cite[(2.37)]{P97}),
we obtain here
\begin{equation}\label{new237}
\begin{aligned}
&t^{\frac\beta2}\int_1^{\frac t2}ds\int_{r_0+1}^\infty d\rho\ \rho^{-\beta}s^{\frac\beta2p}q_{(N_0)}(K_1(t-s),r,\rho)\hat a(\rho)q^p_{(N_0)}(K_1s,\rho,r_0+2)\\
&\ge C_2t^{\frac\beta2}\exp(-\frac {Kr^2}t)\int_1^{\frac t2}s^{-\frac {N_0}2p+\frac\beta2p}(r(s,t))^{\frac {N_0}2+\frac M2-\frac\beta2}(t-s)^{\frac M2-\frac\beta2}ds.
\end{aligned}
\end{equation}
Making the change of variables $u=\frac st$ and recalling that $N_0-\beta=N$, we have
\begin{equation}\label{last}
\begin{aligned}
&t^{\frac\beta2}\int_1^{\frac t2}s^{-\frac {N_0}2p+\frac\beta2p}(r(s,t))^{\frac {N_0}2+\frac M2-\frac\beta2}(t-s)^{\frac M2-\frac\beta2}ds=\\
&t^{1+\frac M2-\frac N2p}\int_{\frac1t}^{\frac12}u^{\frac N2+\frac M2-\frac N2p}(u+pK_2(1-u))^{-\frac N2-\frac M2}(1-u)^{\frac M2-\frac\beta2}du.
\end{aligned}
\end{equation}
If $p<1+\frac{2+M}N$, then $\frac N2+\frac M2-\frac N2p>-1$ and the integral on the right hand side of \eqref{last} is bounded in $t$. However
if $p=1+\frac{2+M}N$, then $\frac N2+\frac M2-\frac N2p=-1$ and that integral is on the order of $\log t$.
Using this fact along with \eqref{connectNN0}-\eqref{last}, we conclude that \eqref{oldresult1} holds with the integer $N_0$ replaced by  non-integral  $N$.
From this and \eqref{keyest} we then also obtain \eqref{oldresult2} with the integer $N_0$ replaced by non-integral $N$.
This completes the proof of Lemma \ref{key}.

\end{proof}

\section{Proof of Theorem \ref{extresult}}
Note
that \eqref{transformedexact}-\eqref{M} with $N$ equal to an integer is the radial version of \eqref{exterior}-\eqref{a-ext} (with $N$ and $M$ identified
with $n$ and $m$). Thus, in fact,
Lemma \ref{key} and the proof of Theorem \ref{negative} given in section 3 give a proof of Theorem \ref{extresult} in the case $n\ge3$. If we prove
the equivalent of Lemma \ref{key} for $n=2$, then we will also have a proof of Theorem \ref{extresult} for $n=2$.
In fact, as the proof of Theorem \ref{negative} showed, it suffices to have the estimate on $w(r,t)$ in Lemma \ref{key}
for $r\ge t^\frac12$.
 Thus, it suffices to prove the following result.
\begin{lemma}\label{n=2}
Let $w$ be a  solution to \eqref{exterior} with $n=2$ on a time interval $0<t<T$,  with $1<p\le 1+\frac{(2+m)^+}2$.
Then for some $K,C>0$,
\begin{equation}\label{oldresult22}
 w(x,t)\ge Ct^{-1}\log(1+t)\exp(-\frac{K|x|^2}t),\ \text{for}\ |x|>t^\frac12\ \text{and}\ 5<t<T.
\end{equation}
\end{lemma}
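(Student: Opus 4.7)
The plan is to repeat the argument in the proof of Lemma \ref{key} specialized to $N = 2$ (so that no Bessel function manipulation of the sort used in \eqref{NN0}--\eqref{last} is needed), with the single change that the Grigor'yan--Saloff-Coste comparison \eqref{qqbar}, which fails for the recurrent Bessel operator of order $2$, is replaced by the weaker companion bound \eqref{GS2}. That weaker bound only yields $\bar q_{(2, r_0)}(t, r, \rho) \ge c\, q_{(2)}(K_0 t, r, \rho)$ on the set where $r \ge t^{1/2}$ (with $\rho$ in an appropriate range), and this is precisely the origin of the spatial restriction in \eqref{oldresult22}.

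Concretely, I would begin from the integral inequality
\begin{equation*}
w(r, t) \ge \int_0^t ds \int_{r_0}^\infty \bar q_{(2, r_0)}(t - s, r, \rho)\, a(\rho)\, w^p(\rho, s)\, d\rho,
\end{equation*}
combined with the linear lower bound $w(\rho, s) \ge \int \bar q_{(2, r_0)}(s, \rho, \eta)\, \psi(\eta)\, d\eta$. After arranging $\psi$ so that $r_0 + 2$ is in its support and applying \eqref{GS2} to the outer propagator (the hypothesis $r \ge (t-s)^{1/2}$ is secured for $s \in [1, t/2]$ by the assumption $r \ge t^{1/2}$) and to the inner propagator (after restricting the inner integration to $\rho \ge s^{1/2} + r_0$, so that $\rho$ plays the role of the ``far'' variable), I obtain the direct analog of \eqref{keyest}:
\begin{equation*}
w(r, t) \ge c_1 \int_1^{t/2} ds \int_{r_0 + 1}^\infty q_{(2)}(K_1(t - s), r, \rho)\, a(\rho)\, q_{(2)}^p(K_1 s, \rho, r_0 + 2)\, d\rho,
\end{equation*}
valid for $r \ge t^{1/2}$ and $t > 5$.

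Once this is in hand, the right-hand side is exactly the $N_0 = 2$ specialization of the integral estimated in \cite[Lemma 2, Proposition 1, Lemma 3]{P97}. The assumption $1 < p \le 1 + (2 + m)^+/2$ places us either in the subcritical regime or exactly at the borderline, so that estimate provides the lower bound $C t^{-1} \log(1 + t)\exp(-K r^2/t)$ for $r \ge t^{1/2}$, $5 < t < T$, which is precisely \eqref{oldresult22}.

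The main obstacle is quantifying \eqref{GS2} itself: the Dirichlet boundary condition at $r = r_0$ must suppress the free Bessel kernel by at most a constant factor when the source is at distance $\ge \sqrt{t}$ from the obstacle. Probabilistically, this is the statement that a planar Brownian motion started at distance $\ge \sqrt{t}$ from $B_{r_0}$ has at least constant probability of avoiding $B_{r_0}$ up to time $t$; this is true despite recurrence because on the time scale $t$ the process explores only a region of diameter $O(\sqrt{t})$, so the obstacle is not yet ``felt'' with probability close to one. A secondary technical point is that the inner propagator $\bar q_{(2, r_0)}(s, \rho, r_0 + 2)$ has its second argument close to the boundary, forcing \eqref{GS2} to be read with $\rho$ as the far variable; restricting the $\rho$-integration to $\rho \ge s^{1/2} + r_0$ permits this, and costs nothing in the subsequent Euclidean computation since, as one sees from \cite[(2.34)--(2.37)]{P97}, the bulk of the integral is supported near $\rho \sim \sqrt{t}$, automatically inside this range.
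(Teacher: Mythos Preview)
Your strategy---reduce to a clean $N=2$ version of \eqref{keyest} and then invoke \cite{P97} as a black box---runs into a genuine obstruction, and the paper does not proceed this way.

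The bound \eqref{GS2} carries a logarithmic weight in \emph{each} spatial variable separately: constant comparison $\bar p_{r_0}(t,x,y)\ge c\,p(K_0t,x,y)$ requires both $|x|$ and $|y|$ to be at least a fixed positive power of $t$; having only one of them far is not enough. This bites you twice. For the outer propagator $\bar q_{(2,r_0)}(t-s,r,\rho)$ with $s\in[1,t/2]$, your restriction $\rho\ge s^{1/2}+r_0$ allows $\rho$ of order $1$ (take $s=1$) while $\sqrt{t-s}\sim\sqrt t$, so the $\rho$-weight there is only $\sim 1/\log t$. More seriously, the inner propagator $\bar q_{(2,r_0)}(s,\rho,r_0+2)$ has one argument \emph{pinned} at the fixed point $r_0+2$; the corresponding weight from \eqref{GS2} is
\[
\frac{\log(3+r_0)}{\log(1+\sqrt s)+\log(3+r_0)}\;\sim\;\frac{c}{\log s},
\]
and no restriction on $\rho$ can remove it. Hence the displayed integral inequality with a flat constant $c_1$ is not available, and the direct appeal to \cite{P97} for $N_0=2$ is not justified.

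The paper's route is different. It does not try to recover a clean version of \eqref{keyest}. Instead it records the linear lower bound \eqref{linearbound} (which already carries the unavoidable factor $1/\log(1+\sqrt s)$ coming from the near-boundary endpoint), substitutes this into the Duhamel inequality, and restricts the integration to $s\in[t^{1/2},t/2]$ and $|y|\ge t^{1/4}$. It is precisely this window---not $s\ge 1$ and $\rho\ge s^{1/2}$---that makes the outer propagator's logarithmic weight genuinely bounded below, since both $|x|\ge t^{1/2}$ and $|y|\ge t^{1/4}$ are fixed positive powers of $t\sim t-s$. The remaining $y$- and $s$-integrals are then estimated directly in Euclidean coordinates (\eqref{manipulations}--\eqref{int-s}), rather than by citing \cite{P97}.
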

\begin{proof}
We assume that $m>-2$ since otherwise there is nothing to prove.
Let $p(t,x,y)=(4\pi t)^{-1}\exp(-\frac{|y-x|^2}{4t})$ denote the heat kernel for the Laplacian on $R^2$, and
let $\bar p_{r_0}(t,x,y)$ denote the corresponding heat kernel for the Laplacian on $R^2-\bar B_{r_0}$ with the Dirichlet boundary condition
at $|x|=r_0$.
It was shown in \cite{GS} that for appropriate constants $c_0,K_0>0$,
one has
\begin{equation}\label{GS2}
\begin{aligned}
&\bar p_{r_0}(t,x,y)\ge c_0\frac{\log(1+|x|)\log(1+|y|)}{\left(\log(1+\sqrt t)+\log(1+ |x|)\right)\left(\log(1+\sqrt t)+\log (1+|y|\right)}p(K_0t,x,y),\\
& \text{for}\ |x|>r_0+1,\ |y|> r_0+1,\ t>0.
\end{aligned}
\end{equation}

We now follow   to a significant degree the proof of blow-up in \cite{P97}. Similar to  \cite[Lemma 1]{P97}, we have
\begin{equation}\label{integralequ}
w(x,t)\ge\int_{R^2-\bar B_{r_0}}\bar p_{r_0}(t,x,y)\phi(y)dy+\int_0^t\int_{R^2-\bar B_{r_0}}\bar p_{r_0}(t-s,x,y)a(y)w^p(y,s)dyds.
\end{equation}
The first term on the right hand side of \eqref{integralequ},
which is the solution of the corresponding linear problem,
constitutes a lower bound for $w$. Thus, using \eqref{GS2}, we have similar to
\cite[Lemma 2]{P97},
\begin{equation}\label{linearbound}
w(x,t)\ge ct^{-1}\exp(-\frac{|x|^2}{2K_0t})
\frac{\log(1+|x|)}{\left(\log(1+\sqrt
t)+\log(1+|x|)\right)\left(\log(1+\sqrt t)\right)},
\end{equation}
for some $c>0$.
Note that for $|x|\ge t^\frac12$, $|y|\ge t^\frac14$ and $t\ge1$, the expression
$\frac{\log(1+|x|)\log(1+|y|)}{\left(\log(1+\sqrt t)+\log(1+ |x|)\right)\left(\log(1+\sqrt t)+\log (1+|y|\right)}$
is bounded and bounded away from 0. Thus,
substituting the estimate \eqref{linearbound} into the second term on the right hand side of \eqref{integralequ}, and using \eqref{GS2} and \eqref{a-ext},
it follows  that
for some $C>0$,
\begin{equation}\label{goodbound}
\begin{aligned}
&w(x,t)\ge \frac Ct\int_{t^\frac12}^{\frac12t}\int_{|y|>t^\frac14}s^{-p}|y|^m\exp(-\frac{|y-x|^2}{Ct})\exp(-\frac{|y|^2p}{2K_0s})dyds,\\
&\text{for}\ |x|\ge t^\frac12\ \text{and large}\ t.
\end{aligned}
\end{equation}
Performing some algebraic manipulations similar to those in \cite[p.166]{P97},
one has for $t,s\ge1$ and   some $c>0$,
\begin{equation}\label{manipulations}
\exp(-\frac{|y-x|^2}{Ct})\exp(-\frac{|y|^2p}{2K_0s})\ge\exp(-\frac{|x|^2}{ct})\exp(-\frac{|y|^2}{cs}).
\end{equation}
Recalling that $m>-2$ and that $n=2$, it is not hard to show, similar to \cite[Lemma 4]{P97}, that for some $k>0$,
\begin{equation}\label{m-int}
\int_{|y|>t^\frac14}|y|^m\exp(-\frac{|y|^2}{cs})dy\ge ks^{1+\frac m2}, \ \text{for}\  s\ge t^\frac12.
\end{equation}
From \eqref{goodbound}-\eqref{m-int}, we obtain for some $k_1>0$,
\begin{equation}\label{final}
w(x,t)\ge\frac{k_1}t\exp(-\frac{|x|^2}{ct})\int_{t^\frac12}^{\frac12t}s^{1+\frac m2-p}ds,\ \text{for}\ |x|\ge t^\frac12\ \text{and large}\ t.
\end{equation}
By assumption, $1<p\le1+\frac{2+m}2=2+\frac m2$; thus, $1+\frac m2-p\ge-1$. Consequently, for some $k_2>0$ and $t\ge5$, we have
\begin{equation}\label{int-s}
\int_{t^\frac12}^{\frac12t}s^{1+\frac m2-p}ds\ge k_2\log t.
\end{equation}
Now \eqref{oldresult22} follows from \eqref{final} and \eqref{int-s}.
\end{proof}

\section{Proofs of Theorem \ref{spectral} and Corollary \ref{corspectral}}
\noindent \it Proof of Theorem \ref{spectral}.\rm\
It is known that $\lambda_{0;D}(-\Delta +V)$ is non-increasing in $D$ and that $\lambda_{0;D}(-\Delta +V)=\lim_{k\to\infty}
\lambda_{0;D_k}(-\Delta +V)$, if $D_k\uparrow D$ \cite[chapter 4]{P95}.
These properties of $\lambda_{0;D}(-\Delta+V)$  allow us to assume without loss of generality
 that the domain $D$ in the statement of the  theorem is    bounded and has a smooth boundary.
 As such,  $\lambda_{0;D}(-\Delta+V)<0$ is in fact the principal eigenvalue for $-\Delta+V$ in $D$ with the Dirichlet
boundary condition. Let $\psi_0>0$,
normalized by $\int_D\psi_0(x)dx=1$, denote the corresponding eigenfunction.

Assume now that $u(r,t)$ is a global solution to \eqref{probextra} for some $p>1$.
  Define
\begin{equation}
F(t)=\int_Du(x,t)\psi_0(x)dx.
\end{equation}
We will show that $F$ blows up at some finite time, thereby contradicting the assumption that $u$ is a global solution.
Note that $\psi_0$ vanishes on $\partial D$ and that $\nabla\psi_0\cdot \nu\le0$ on $\partial D$, where
$\nu$ is the unit outward normal to $D$ at $\partial D$. Also, by assumption $\inf_{x\in D}a(x)\ge\delta$, for some $\delta>0$.
Integrating by parts, and using Jensen's inequality and the facts above, we have
\begin{equation}\label{Fprime}
\begin{aligned}
&F'(t)=\int_Du_t(x,t)\psi_0(x)dx=\int_D(\Delta u-Vv+au^p)(x)\psi_0(x)dx\\
&\ge-\lambda_{0;D}(-\Delta+V)F(t)+\delta F^p(t)\ge\delta F^p(t).
\end{aligned}
\end{equation}
Although the initial data $\phi$ of $u$ may vanish identically on $D$, one certainly has $F(t)>0$ for $t>0$. Thus, it
follows from \eqref{Fprime} and the fact that $p>1$ that $F$ blows up at some finite time.
\hfill $\square$
\medskip

\noindent \it Proof of Corollary \ref{corspectral}.\rm\
It is well-known that $\lambda_{0;R^n-\{0\}}(-\Delta+\frac \gamma{|x|^2})<0$ if $\gamma>\frac{(n-2)^2}4$ \cite[pp. 153-154]{P95}.
Let  $B_k=\{x\in R^n:|x|<k\}$.
Recalling the facts noted in the first line of the proof of Theorem \ref{spectral},
it follows that $\lambda_{0;B_k-\bar B_\epsilon}(-\Delta+\frac\gamma{|x|^2})<0$, for sufficiently large $k$ and sufficiently small $\epsilon>0$.
Since $a$ is continuous and positive by assumption, it follows that $a$ is bounded away from 0 on $B_k-\bar B_\epsilon
$. Thus, the corollary follows from
Theorem \ref{spectral}.
\hfill $\square$

\end{document}